\theoremstyle{thmstyleone}%
\newtheorem{theorem}{Theorem}[section]% meant for sectionwise numbers
\newtheorem{proposition}[theorem]{Proposition}% 
\newtheorem{lemma}[theorem]{Lemma}
\theoremstyle{thmstyletwo}%
\newtheorem{example}[theorem]{Example}%
\newtheorem{remark}[theorem]{Remark}%
\theoremstyle{thmstylethree}%
\newtheorem{definition}[theorem]{Definition}%
\newcommand{\IN}{\mathbb{N}}           % natural numbers
\newcommand{\IZ}{\mathbb{Z}}           % Integers 
\newcommand{\IR}{\mathbb{R}}           % real numbers
\newcommand{\mcS}{\mathcal{S}}          % state space
\newcommand{\mcA}{\mathcal{A}}          % action space
\newcommand{\mcR}{\mathcal{R}}          % set of rewards
\newcommand{\mcM}{\mathcal{M}}			% MDP
\newcommand{\mcMDP}{\mcM \doteq (\mcS, \mcA, \Psi, \mcR, p)}
\newcommand{\ol}[1]{ {\overline{#1}} }  % overline
\newcommand{\olS}{\ol{\mcS}}
\newcommand{\olA}{\ol{\mcA}}
\newcommand{\olPsi}{\ol{\Psi}}
\newcommand{\olR}{\ol{\mcR}}
\newcommand{\olp}{\ol{p}}
\newcommand{\quotsimM}{\mcM/\!\!\sim}
\newcommand{\quotsimMprime}{\mcM'/\!\!\sim}
\newcommand{\quotsimS}{\mcS/\!\!\sim}
\newcommand{\quotsimPsi}{\Psi/\!\!\sim}
\newcommand{\quotsimMprimeprime}{\mcM''/\!\!\sim}
\newcommand{\abs}[1]{\left\lvert #1 \right\rvert}       % absolut value
\newcommand{\myset}[1]{\left\{\, #1 \,\right\}}     % for sets
\newcommand{\mysetlong}[2]{\left\{\, #1 \,\middle\vert  \,  #2 \,\right\}}     % for sets with conditions. 
\newcommand{\longto}{\longrightarrow}
\DeclareMathOperator*{\argmax}{arg\,max}
\newcommand{\expected}[2][]{\mathbb{E}_{#1} \left[ #2 \right] } % expected value with optinal subtext
\newcommand{\expectedlong}[3][]{\mathbb{E}_{#1} \left[ #2 \middle\vert  #3 \right] } % expected value with
\newcommand{\reward}{\operatorname{Rew}}
\newcommand{\optdet}{ {\pi_{*}^{\det}} }
\begin{document}

%\title[Hidden symmetries in Markov decision processes]{Hidden symmetries and model minimization in Markov decision processes: explained and applied to the multi-period newsvendor problem}
%\title[Hidden Symmetries in Markov Decision Processes]{Hidden Symmetries in Markov Decision Processes: Explained and Applied to the Multi-period Newsvendor Problem}
\title[Hidden Symmetries in Markov Decision Processes]{Hidden Symmetries and Model Reduction in Markov Decision Processes: Explained and Applied to the Multi-period Newsvendor Problem}
%\title[Hidden Symmetries in Markov Decision Processes]{Reduction of Markov Decision Processes using Hidden Symmetries: Explained and Applied to the Multi-period Newsvendor Problem}

%%=============================================================%%
%% Prefix	-> \pfx{Dr}
%% GivenName	-> \fnm{Joergen W.}
%% Particle	-> \spfx{van der} -> surname prefix
%% FamilyName	-> \sur{Ploeg}
%% Suffix	-> \sfx{IV}
%% NatureName	-> \tanm{Poet Laureate} -> Title after name
%% Degrees	-> \dgr{MSc, PhD}
%% \author*[1,2]{\pfx{Dr} \fnm{Joergen W.} \spfx{van der} \sur{Ploeg} \sfx{IV} \tanm{Poet Laureate} 
%%                 \dgr{MSc, PhD}}\email{iauthor@gmail.com}
%%=============================================================%%

\author*[1]{\fnm{Tobias} \sur{Joosten}}\email{tobias.joosten@itwm.fraunhofer.de}

\author[1]{\fnm{Karl-Heinz} \sur{K\"ufer}}\email{karl-heinz.kuefer@itwm.fraunhofer.de}
%\equalcont{These authors contributed equally to this work.}

\affil[1]{\orgdiv{Department of Optimization}, \orgname{Fraunhofer Institute for Industrial Mathematics ITWM}, \orgaddress{\street{Fraunhofer-Platz 1}, \city{Kaiserslautern}, \postcode{67663}, \state{Rhineland-Palatinate}, \country{Germany}}}

%%==================================%%
%% sample for unstructured abstract %%
%%==================================%%

%\abstract{ 
%	Symmetry breaking is a common approach for model reduction of Markov decision processes (MDPs).
%	%	This approach only uses directly accessible/visible symmetries of MDPs.
%	%	For some MDPs, it is possible to transform them equivalently such that symmetries become visible -- we call this type of symmetries \emph{hidden symmetries}.
%	Currently, this approach only considers directly accessible symmetries of MDPs.
%	However, there are MDPs that have a symmetry that cannot be captured by this type of symmetries.
%	
%	For these MDPs, hidden symmetries allow substantially better model reduction compared to visible symmetries.	
%	The main idea is to reveal a hidden symmetry by altering the reward structure and then exploit the revealed symmetry by forming a quotient MDP.
%	The quotient MDP is the reduced MDP, since it is sufficient to solve the quotient MDP instead of the original one.
%	In this paper, we introduce hidden symmetries and the associated concept of model reduction.
%	We demonstrate this concept on the multi-period newsvendor problem, which is the newsvendor problem considered over an infinite number of days.
%	In this way, we show that hidden symmetries can reduce problems that visible symmetries cannot, and present a basic idea of revealing hidden symmetries in multi-period problems.
%	The presented approach can be extended to more sophisticated problems.
%}

\abstract{ 
	Symmetry breaking is a common approach for model reduction of Markov decision processes (MDPs).
	This approach only uses directly accessible symmetries such as geometric symmetries.
	For some MDPs, it is possible to transform them equivalently such that symmetries become accessible -- we call this type of symmetries \emph{hidden symmetries}.
	For these MDPs, hidden symmetries allow substantially better model reduction compared to directly accessible symmetries.	
	The main idea is to reveal a hidden symmetry by altering the reward structure and then exploit the revealed symmetry by forming a quotient MDP.
	The quotient MDP is the reduced MDP, since it is sufficient to solve the quotient MDP instead of the original one.
	In this paper, we introduce hidden symmetries and the associated concept of model reduction.
	We demonstrate this concept on the multi-period newsvendor problem, which is the newsvendor problem considered over an infinite number of days.
	In this way, we show that hidden symmetries can reduce problems that directly accessible symmetries cannot, and present a basic idea of revealing hidden symmetries in multi-period problems.
	The presented approach can be extended to more sophisticated problems.
}

\keywords{Markov decision process, model reduction, hidden symmetry, newsvendor problem, multi-period problem}

%%\pacs[JEL Classification]{D8, H51}

%%\pacs[MSC Classification]{35A01, 65L10, 65L12, 65L20, 65L70}

\maketitle

\section{Introduction}

A \emph{Markov decision process} (MDP) is a stochastic framework that models the interaction of a decision maker with an environment.
The environment is in a certain state and the decision maker has to take an action. The outcome of the taken action is a new state and a reward, which are partially random and only depend on the current state.
The objective is to compute an policy/behavior for the decision maker that maximizes the expected total discounted reward; such a policy/behavior is called \emph{optimal}.
This computation can be done using algorithms from \emph{dynamic programming} or \emph{reinforcement learning} depending on what information about the MDP is given, as stated by \cite{Sutton2017}.
Many real-world problems are too complex to be solved in a reasonable amount of time using these algorithms.
For example,
\cite{Boutilier1995} discussed this for planning problems where the number of states grows exponentially with the number of variables relevant to the problem.
This is especially due to the fact that the common algorithms for solving them run in time polynomial in the size of the state space \citep{Puterman2014}.
% as \citet{Givan2003} stated. %% they refer to Puterman
Therefore, if you want to solve these problems, you have to reduce them.
An elementary and theoretical question that arises is: What options do we have to reduce MDPs?

The simplest way is to aggregate states in an MDP that can simulate each other. This general concept is called \emph{bisimulation}.
\cite{Dean1997} did this by using partitions of the state space that fulfill \emph{stochastic bisimulation homogeneity}, as their framework. This property is related to the \emph{substitution property} for finite automata \citep{Hartmanis1966}, the \emph{lumpability} for Markov chains \citep{Kemeny1976} and \emph{bisimulation equivalence} for transition systems \citep{Lee1992}.
\cite{Givan2003} did this by using \emph{stochastic bisimulation relations}, which is a general concept for transition systems \citep{Castro2010,Larsen1991}.
The weakness of bisimulation is that the naming of actions has a strong impact on which states are equivalent to each other.
Therefore, the naming of actions affects which states are aggregated together.
For example, the games tic-tac-toe and Go contain rotation and mirror symmetries. However, they cannot be reduced using stochastic bisimulation because of the name of the actions.
Here, the concept of \emph{symmetries} is needed.

Symmetries are a more general concept than bisimulation. 
They can be used to reduce MDPs similar to symmetry breaking in combinatorial programming \citep{Walsh2006, Walsh2012}.
%Some obvious examples for problems that can be reduced using symmetries are games like tic-tac-toe or Go. In these, one can use geometric symmetries like rotation or mirror symmetry to aggregate states of the board; to be more precise, state-action pairs are aggregated here. 
There are several approaches to model symmetries in MDPs: 
\cite{Zinkevich2001} defined particular equivalence relations to model symmetries. 
\cite{Ravindran2001} took a different approach and introduced homomorphisms between MDPs to define symmetries.
These two definitions of symmetries are equivalent.
%The symmetries of Zinkevich and Balch as well as the symmetries of Ravidran and Barto,
They include only the directly accessible/visible symmetries in an MDP 
and not the symmetries that are visible after an equivalent transformation of the MDP.
We refer to this type of symmetry as \emph{visible symmetry}.
As an example, \cite{Mahajan2017} investigated reflection symmetry in the Cart-Pole problem or fold symmetries in grid worlds. These are classical examples for visible symmetries.

Bisimulation and visible symmetries aggregate states or state-action pairs of an MDP that have the same reward structure.
These aggregations partition the state space or set of state-action pairs.
\cite{Dean1997} and \cite{Ravindran2001} stated that these partitions induce \emph{quotient MDPs}.
This is basically about the transitions between blocks of aggregated states, since the reward structure is the same in each block.
Quotient MDPs have the property that solutions to them can be transferred to solutions to the original MDP.
This is done using a \emph{pullback}, a mapping that maps an optimal policy in the quotient MDP to an optimal policy in the original MDP \citep{Ravindran2001, Givan2003}.
%\todo{they call it lift, but lift is not the right terminus}
The quotient MDP in combination with the pullback forms the \emph{reduction} of the original MDP.
In addition, there is a maximally reduced MDP, giving rise to the concept of model \emph{minimization}, which was discussed by \cite{Dean1997} with respect to bisimulation and \cite{Ravindran2001} with respect to visible symmetries.

Overall, bisimulation and visible symmetries are our current options for reducing MDPs.
%They are based on the fact that the reward structure for equivalent states or equivalent state-action pairs have to be equal.
However, there are problems that have no visible symmetries but where a symmetry is hidden.
Therefore, their MDPs cannot be reduced using the above techniques.
In this paper, we introduce a new type of symmetry that captures these hidden symmetries and allows us to reduce these problems:

\paragraph*{Hidden Symmetries}
The main idea is to reveal hidden symmetries by transforming the given MDP.
Two types of transformations are allowed: a) altering the reward structure while keeping optimal policies, and b) relabeling of states and actions.
Since these transformations do not change the main structure of the MDP,
we call MDPs \emph{equivalent} if they can be transformed into each other by these transformations.
Indeed, there is a bijective mapping between the sets of optimal policies of equivalent MDPs.

A \emph{hidden symmetry} of an MDP $\mcM$ is basically a visible symmetry of an equivalent MDP. Therefore, it is formally defined as a tuple $(\mcM', \sim)$ where $\mcM'$ is equivalent to $\mcM$ and $\sim$ is a visible symmetry of $\mcM'$.
This tuple naturally induces the associated quotient MDP $\quotsimMprime$ and a pullback of optimal policies from $\quotsimMprime$ to $\mcM'$.
This means, it is sufficient to solve $\quotsimMprime$ instead of $\mcM'$.
Furthermore, $\mcM$ and $\mcM'$ are equivalent.
%, and thus
Thus, there is bijective mapping between their optimal policies.
This yields a pullback from optimal policies in $\quotsimMprime$ to optimal policies in $\mcM$.
This means, it is sufficient to solve $\quotsimMprime$ instead of $\mcM$.
Altogether, the quotient MDP $\quotsimMprime$ in combination with this pullback forms the reduction of the original MDP $\mcM$ using the hidden symmetry $(\mcM',\sim)$.
This approach describes the concept of \emph{model reduction using hidden symmetries}. 
Model reduction can be extended to model \emph{minimization}, but we do not discuss this topic in this paper.

Hidden symmetries are an extension of visible symmetries because for trivial transformations hidden symmetries are basically visible symmetries.
Therefore, hidden symmetries extend the model reduction and model minimization using visible symmetries of \cite{Zinkevich2001} and \cite{Ravindran2001}.
%\\

The most important part of hidden symmetries is the transformation that alters the reward structure while keeping optimal policies. This transformation causes hidden symmetries to extend visible symmetries.
%because for trivial transformations hidden symmetries are basically visible symmetries.
This type of transformation is called \emph{policy invariant reward shaping} and was introduced by \cite{Ng1999}. 
The more general concept of altering the reward structure without regard to optimal policies is called \emph{reward shaping}.
However, reward shaping is a transformation that is not suitable for hidden symmetries. This is because reward shaping in general results in a non-equivalent MDP that provides incorrect solutions to the original problem, as showed by \cite{Randlov1998}.
In the literature, the general focus and motivation of (policy invariant) reward shaping is to speed up the learning process of reinforcement learning algorithms
\citep{Ng1999, Behboudian2021, Laud2003, Laud2004}.
In contrast, our motivation is to reveal a hidden symmetry to reduce the given MDP.
%using policy invariant reward shaping, which can be used to reduce the given MDP.
%\\

Hidden symmetries exist in various problems.
Since factored MDPs \citep{Boutilier2000} received some attention in the literature on model minimization in terms of bisimulation \citep{Dean1997} and visible symmetries \citep{Givan2003},
we are interested in a similar field where hidden symmetries exist.
It appears that control problems in discrete-time dynamical systems 
is such a field.
%should benefit from our concept of hidden symmetries.
These problems are basically multi-period problems that operate deterministically and are perturbed stochastically from the outside \citep{Puterman2014}. They arise frequently in various domains such as supply chains, logistics and other economic sectors.
A simple 
%and well-know 
problem of this problem class is the \emph{multi-period newsvendor problem}, which is an extension of the newsvendor problem that was first introduced by \citet{Edgeworth1888}.  
The newsvendor problem is a classical problem in the literature and is often used for analysis (see \cite{Khouja1999} and \cite{Qin2011} for review articles).

Our ultimate goal is to use the multi-period newsvendor problem to show that hidden symmetries exist, how they can be revealed, and that they allow us to reduce problems.
This problem is well-suited for this demonstration because it has no visible symmetries but a hidden symmetry.
%\\

This paper is organized as follows.
Section \ref{section - basics and notations} introduces basics about MDPs and the required tools needed for revealing and exploiting hidden symmetries.
This includes visible symmetries in MDPs and their quotient MDPs, reward shaping and relabeling.
Then, Section \ref{section - hidden symmetries} introduces hidden symmetries and the main theorem of this paper that shows how to reduce an MDP using a hidden symmetry.
The concept of reducing an MDP using a hidden symmetry is then demonstrated in Section \ref{section - reduction of the m-p newsvendor problem} on the multi-period newsvendor problem.
In addition, we briefly discuss the multi-period newsvendor problem with a 5-day cycle and show that our reduction method works here as well.
We finish with a conclusion in Section \ref{section - conclusion}.

\section{Preliminaries}\label{section - basics and notations}
This section introduces the \emph{Markov Decision Process} (MDP) in its very general form defined by \cite{Sutton2017} plus the tools that are needed to define hidden symmetries in MDPs.

\subsection{Markov Decision Process}
A \emph{Markov Decision Process} is a 5-tuple $\mcM \doteq(\mcS, \mcA, \Psi, \mcR, p)$ where 
$\mcS$ is the set of states,
$\mcA$ is the set of actions,
$\Psi \subseteq \mcS \times \mcA$ is the set of admissible state-action pairs such that for all $s\in\mcS$ there exists an $a\in\mcA$ with $(s,a)\in \Psi, $
$\mcR \subseteq \IR$ is a bounded subset and is the set of rewards, and
$p$ is the dynamic of the MDP given as a map $p\colon \mcS \times \mcR \times \Psi \to [0,1]$ so that $\sum_{s'\in\mcS,r \in\mcR} p(s',r,(s,a)) = 1$ holds for all $(s,a)\in\Psi$ .
%$p$ is a map with $p\colon \mcS \times \mcR \times \Psi \to [0,1]$ that holds the property
%\begin{align*}
%	\forall (s,a) \in\Psi :\sum_{s'\in\mcS,r \in\mcR} p(s',r,(s,a)) = 1,
%\end{align*} 
%which is called the dynamic of the MDP $\mcM$. 	
The value $p(s',r,(s,a))$ describes the probability of going 
to state $s'$ and receiving reward $r$ when starting in state $s$ and taking action $a$;
thus, we use the notation $p(s',r \vert s,a) \doteq p(s',r,(s,a))$.		
The state transition function is encoded in the dynamic $p$. It is given by the function
$\mcS \times \Psi  \to [0,1]$, $(s',(s,a)) \mapsto \sum_{r\in\mcR} p(s',r\vert s,a)$ that we also denote by the letter $p$.
%\begin{align*}
%	\mcS \times \Psi  \longto [0,1], \quad
%	(s',(s,a)) \longmapsto \sum_{r\in\mcR} p(s',r\vert s,a).		
%\end{align*}
%We denote this function also by the letter $p$. 
%Since the state transition function is a three-argument function and the dynamic is a four-argument function, one can distinguish between the dynamic and the state transition function by the arguments.
Because $p (s',(s,a))$ describes the probability of going from state $s$ to state $s'$ with action $a$, we have a conditional probability and use the notation 
$p(s'\vert s,a) \doteq p (s',(s,a))$.
Furthermore, we denote by $\mcA(s) \doteq \mysetlong{a \in\mcA }{ (s,a) \in\Psi}$ the admissible actions in the state $s\in\mcS$.
In this work, we assume that the set $\mcS$ is countable and $\mcA(s)$ is finite for all $s\in\mcS$.

A \emph{policy} $\pi$ is a mapping from $\Psi$ to the real interval $[0,1]$ such that for all $s\in\mcS$ the equation $	\sum_{a\in\mcA(s)} \pi((s,a)) = 1$ is true.
The value $\pi((s,a))$ describes the probability of taking action $a$ in state $s$; therefore, we use the notation $\pi(a\vert s) \doteq \pi((s,a))$.	
Furthermore, we denote the \emph{set of all polices in $\mcM$} by $\Pi(\mcM)$.
A policy $\pi$ is called \emph{deterministic} if it only takes values in $\myset{0,1}$. Then, there exists for all $s\in\mcS$ exactly one $a\in\mcA$ with $\pi(a\vert s)=1$.
Therefore, we can write a deterministic policy $\pi$ also as a function from $\mcS$ to $\mcA$ with $\pi(s)=a$ for all $(s,a)\in\Psi$ with $\pi(a\vert s) =1$.

The \emph{trajectory} that is induced in an MDP by a policy starting in state $S_0$ is denoted by $(S_0,A_0,R_1,S_1,A_1,R_2,  \dots)$ where $S_i$, $A_i$, $R_i$ are random variables of state, action and reward, respectively.

% optimality criterion:
As the optimality criterion we use the expected total discounted reward. 
Therefore, the \emph{value function} for a given policy $\pi$ in the MDP $\mcM$ is given by
$V_\pi^\mcM \colon \mcS \to \IR$ with
$V_\pi^\mcM(s) \doteq  \expectedlong[\pi,p]{ \sum_{k=0}^\infty \gamma^k R_{k+1} }{ S_0 = s}.$

The \emph{optimal value function} is defined as $V_*^\mcM (s) \doteq \sup_{\pi\in\Pi(\mcM)} V_\pi^\mcM(s)$, and an \emph{optimal policy} is defined as a policy $\pi$ with $V_\pi^\mcM(s)=V_*^\mcM(s)$ for all $s\in\mcS$.
We denote the \emph{set of all optimal policies in $\mcM$} by $\Pi_*(\mcM)$.

Lastly, we define the \emph{expected reward of taking action $a$ in state $s$} by 
$R^\mcM(s,a)$.

\subsection{Visible symmetries in MDPs and their Quotient MDPs}

This subsection defines visible symmetries in MDPs based on the definition of MDP symmetries by \cite{Zinkevich2001}.
The term visible symmetry is used to better distinguish between visible and hidden symmetries.
%Additionally, we define when a visible symmetry is called simple.
It is also shown how to create a quotient MDP out of an MDP and an associated visible symmetry, and how to pull policies from the quotient MDP back to the original MDP.
%It also takes a look at the case where $\sim$ is a simple visible symmetry; here several things become simpler.

\begin{definition}	
Let $\mcMDP$ be an MDP.
A \emph{visible symmetry} of $\mcM$ is a tuple $\sim \doteq (\sim_\mcS, \sim_\Psi)$ where
	$\sim_\mcS$ is an equivalence relation of $\mcS$, and	
	$\sim_\Psi$ is an equivalence relation of $\Psi$			
	such that
	\begin{itemize}
		\item for all $(s_1,a_1)\in\Psi$ and $s_2\in\mcS$ with $s_1\sim_\mcS s_2$ exists an action $a_2\in\mcA(s_2)$ with $(s_1,a_1) \sim_\Psi (s_2,a_2)$, and
		
		\item for all $(s_1,a_1), (s_2,a_2)\in\Psi$ with $(s_1,a_1) \sim_\Psi (s_2,a_2)$ it holds $s_1 \sim_\mcS s_2$	and
		 the statement
		\begin{align*}
			\forall X\in\mcS/\!\!\sim_\mcS, r\in\mcR : 
			p(X,r \vert s_1,a_1) = p(X,r\vert s_2,a_2)
		\end{align*}
		with $p(X,r\vert s_i,a_i) \doteq \sum_{\tilde{s}\in X} p(\tilde{s},r\vert s_i,a_i)$
		is true.
	\end{itemize}
	Additionally, we call a visible symmetry \emph{simple} if 
	$(s_1,a_1) \sim_\Psi (s_2,a_2)$ implies	$a_1 = a_2$.
%	for all $(s_1,a_1)$, $(s_2,a_2) \in\Psi$ with $(s_1,a_1) \sim_\Psi (s_2,a_2)$ the equation	$a_1 = a_2$	holds.

Since a visible symmetry of an MDP can identify several actions in a state with each other, we capture the number of actions identified with each other for a state-action pair $(s,a)\in\Psi$ by 
$N_\Psi (s,a) \doteq \abs{\mysetlong{a'\in\mcA(s) }{ (s,a) \sim_\Psi (s,a')} }$.
\end{definition}

\begin{remark}
	Simple visible symmetries are basically stochastic bisimulations. 
	For these, $N_\Psi(s,a)$ is always equal to 1.
\end{remark}

\begin{definition}
	A visible symmetry $\sim \doteq (\sim_\mcS, \sim_\Psi)$ of the MDP $\mcM$ induces the \emph{quotient MDP} $\quotsimM$, which is defined
	(by abuse of notation) as the MDP $\quotsimM \doteq(\ol{\mcS}, \mcA, \ol{\Psi}, \mcR, \ol{p} )$ 
	with $\ol{\mcS} \doteq \quotsimS_\mcS$, $\ol{\Psi} \doteq \Psi/\!\!\sim_\Psi$
	and the dynamic
	$	\ol{p}\colon \ol{\mcS} \times \mcR \times \ol{\Psi} \to [0,1]	$
	is given by
	\begin{align*}
		\ol{p}([s'],r\vert [(s,a)]) &\doteq p([s'],r \vert s,a)\doteq \sum_{\tilde{s}\in[s]} p(\tilde{s},r\vert s,a).				
	\end{align*}
	If we have a simple visible symmetry, the quotient MDP simplifies and we can write 
	$\ol{p}([s'],r\vert [s],a)$ instead of $\ol{p}([s'],r\vert [(s,a)])$ and $\pi(a\vert [s])$ instead of $\pi([(s,a)])$ where $\pi$ is a policy in $\quotsimM$.

	The abuse of notation refers to the fact that by definition $\olPsi \subseteq \olS \times \mcA$ must hold.
	This is fine because by choosing representatives $[(s,a)]$ for all elements in $\olPsi$ and using the mapping $[(s,a)] \mapsto  ([s],a)$, we get $\olPsi \subseteq \olS \times \mcA$. 
	However, such a determination of representatives is not necessary for the quotient MDP.
\end{definition}

Policies in the quotient MDP can be \emph{pulled back} to policies to the original MDP. Moreover, the pullback preserves the optimality property.
Therefore, it is sufficient to solve the quotient MDP instead of the original one. We refer to a theorem of \citet{Ravindran2001} for this:

\begin{theorem}\label{thm - quotient MDP pullback of optimal policies}
	Let $\mcMDP$ be an MDP and $\sim$ a visible symmetry of $\mcM$.
	Let $\pi$ be a policy in the quotient MDP $\quotsimM$.
	The pullback of the policy $\pi$ is the policy $\pi'\in\Pi(\mcM)$ with
	\begin{align*}
		\pi' \colon \Psi \longto [0,1], \quad
		(s,a) \longmapsto \frac{\pi([(s,a)]_\Psi)}{N_\Psi(s,a)}.
	\end{align*}
	If $\pi$ is optimal in $\quotsimM$, then the pullback $\pi'$ is optimal in $\mcM$.	
	Moreover, if $\sim$ is a simple visible symmetry, the pullback simplifies to $\pi'(s,a) = \pi(a \vert [s])$.
\end{theorem}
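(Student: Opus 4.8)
The plan is to prove the statement in four steps: (i) verify that $\pi'$ is a genuine policy of $\mcM$; (ii) show that the value function of $\pi'$ in $\mcM$ agrees, along $\sim_\mcS$-classes, with the value function of $\pi$ in $\quotsimM$; (iii) show that the optimal value functions of $\mcM$ and $\quotsimM$ agree in the same sense; and (iv) combine (ii) and (iii). For step (i), fix $s\in\mcS$ and partition $\mcA(s)$ into blocks by declaring $a$ and $a'$ equivalent iff $(s,a)\sim_\Psi(s,a')$. On a block $B$ the number $N_\Psi(s,a)$ equals $\abs{B}$ and $[(s,a)]_\Psi$ is a fixed class $X_B$; moreover the first axiom of a visible symmetry shows that $B\mapsto X_B$ is a bijection onto $\ol{\mcA}([s])\doteq\mysetlong{[(s,a)]_\Psi}{a\in\mcA(s)}$, the set of classes admissible at $[s]$ in $\quotsimM$. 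Hence $\sum_{a\in\mcA(s)}\pi'(a\vert s)=\sum_B\abs{B}\cdot\pi(X_B)/\abs{B}=\sum_{X\in\ol{\mcA}([s])}\pi(X)=1$ since $\pi$ is a policy in $\quotsimM$. In the simple case every block is a singleton, $N_\Psi\equiv1$, the classes $[(s,a)]_\Psi$ are identified with pairs $([s],a)$, and the formula collapses to $\pi'(s,a)=\pi(a\vert[s])$, which is the asserted simplification.

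Step (ii) is the core. Two consequences of the second axiom are used: summing the identity $p(X,r\vert s_1,a_1)=p(X,r\vert s_2,a_2)$ over all $X\in\mcS/\!\!\sim_\mcS$ shows that $R^\mcM(s,a)$ depends only on $[(s,a)]_\Psi$ and equals the corresponding expected reward $\olR$ in $\quotsimM$; the same identity says that the lumped transition probability $\sum_{r}p(X,r\vert s,a)$ depends only on $[(s,a)]_\Psi$ and equals the state-transition of $\olp$. I would then verify that $s\mapsto V_{\pi}^{\quotsimM}([s])$ satisfies the policy-evaluation Bellman equation of $\pi'$ in $\mcM$: expanding the right-hand side $\sum_{a\in\mcA(s)}\pi'(a\vert s)\bigl[R^\mcM(s,a)+\gamma\sum_{s'\in\mcS}p(s'\vert s,a)W([s'])\bigr]$ with $W\doteq V_{\pi}^{\quotsimM}$, the factors $1/N_\Psi(s,a)$ collapse each block $B$ to a single term weighted by $\pi(X_B)$, so the expression becomes $\sum_{X\in\ol{\mcA}([s])}\pi(X)\bigl[\olR(X)+\gamma\sum_{[s']}\olp([s']\vert X)W([s'])\bigr]=W([s])$ by the Bellman equation for $\pi$ in $\quotsimM$. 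Since $\mcR$ is bounded and $\gamma<1$, the policy-evaluation operator is a contraction on the bounded functions $\mcS\to\IR$ with a unique fixed point, so $V_{\pi'}^\mcM(s)=V_\pi^{\quotsimM}([s])$ for all $s$.

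For step (iii) the same collapsing computation, applied now to the Bellman \emph{optimality} equation, shows that $s\mapsto V_*^{\quotsimM}([s])$ solves the Bellman optimality equation of $\mcM$; here one additionally uses that, as $a$ ranges over $\mcA(s)$, the class $[(s,a)]_\Psi$ ranges over $\ol{\mcA}([s])$, so the two maxima coincide. By uniqueness of the solution of the optimality equation, $V_*^\mcM(s)=V_*^{\quotsimM}([s])$ for all $s$. Finally, if $\pi$ is optimal in $\quotsimM$, step (ii) gives $V_{\pi'}^\mcM(s)=V_\pi^{\quotsimM}([s])=V_*^{\quotsimM}([s])=V_*^\mcM(s)$ for every $s$, i.e.\ $\pi'$ is optimal in $\mcM$.

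I expect step (ii) to be the main obstacle: one has to keep the weights $1/N_\Psi(s,a)$ exactly aligned with the block structure so that the stochastic pullback genuinely reproduces the quotient dynamics, and this is the one place where the definition of a visible symmetry is used in full force. The remaining work — the representative bookkeeping, the contraction arguments, and checking at the outset that $\quotsimM$ is a legitimate MDP in the sense of the definition (countable state set, finite admissible-action sets, bounded rewards, and a normalized, admissible $\olp$) — is routine and follows from the corresponding properties of $\mcM$ together with the first axiom.
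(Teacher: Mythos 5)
Your argument is correct, but it follows a genuinely different route from the paper. The paper disposes of this theorem in three lines by translating the quotient map $\Psi \to \quotsimPsi$ into an MDP homomorphism and invoking Theorem~2 of Ravindran and Barto, together with an unargued remark that their finite-state result ``can be extended'' to the countable-state setting used here. You instead give a self-contained proof through the Bellman machinery: the block decomposition of $\mcA(s)$ and the bijection $B \mapsto X_B$ (a consequence of the first axiom) handle normalization of $\pi'$ and the matching of the two maxima, while the second axiom delivers class-invariance of $R^\mcM$ and of the lumped transitions, so that $s \mapsto V_\pi^{\quotsimM}([s])$ and $s \mapsto V_*^{\quotsimM}([s])$ are verified to be fixed points of the evaluation and optimality operators, respectively, with uniqueness supplied by the contraction property for $\gamma<1$ and bounded rewards. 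What your approach buys is precisely what the paper leaves implicit: a proof valid directly for countable state spaces (which the paper needs, since the newsvendor MDP has infinitely many states), with the role of each axiom of a visible symmetry made explicit; the cost is the bookkeeping you acknowledge. What the paper's citation-based route buys is brevity and a conceptual link to the homomorphism framework, at the price of an extension claim it does not substantiate. Two small points to keep in order if you write this out in full: you should confirm at the outset that $\olp$ is well defined on classes (independence of the representative $(s,a)$, again from the second axiom), and your contraction step silently assumes $\gamma<1$, which the paper never states explicitly but which is needed for the discounted criterion to be meaningful with bounded rewards.
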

\begin{proof}
	Ravindran and Barto showed this only for finite MDPs using MDP homomorphisms \citep[Theorem 2]{Ravindran2001}. However, their result can be extended to our kind of MDP.
	Finally, by translating the quotient map 
	\[ 	\Psi \longto \quotsimPsi,\quad	(s,a) \longmapsto [(s,a)]  \]
	into an MDP homomorphism, we can apply their statement and the theorem follows.
\end{proof}

 \begin{remark}
 	Note that it is sufficient to define the pullback of $\pi$ such that the equation
 	$$ \sum_{a'\in \mysetlong{a'\in\mcA(s) }{ (s,a)\sim_\Psi (s,a')}}
 	\pi'(s,a')  = \pi([(s,a)]_\Psi)$$
 	is true for all $(s,a)\in\Psi$. We use the definition above to make the pullback unique.
 \end{remark}

\subsection{Policy Invariant Reward Shaping}

This subsection introduces reward shaping as well as policy invariant reward shaping.
These concepts are converted into concepts of transition structures of MDPs, as they are more practical.
\\

\emph{Reward shaping} is the technique to alter the dynamic of an MDP without changing its transition structure; thus, only the reward structure is changed.
From another point of view, this means: 
Two MDPs can create each other by reward shaping if and only if they have the same transition structure (this includes that the set of states, actions and admissible state-action pairs are the same).
Therefore, we define the following property for two MDPs.
\begin{definition}
	Let $\mcMDP$ and $\mcM' \doteq (\mcS,\mcA, \Psi, \mcR',p')$ be MDPs. 
	The MDPs $\mcM$ and $\mcM'$ \emph{have the same transition structure} if the statement
	\[ \forall (s,a)\in\Psi, s'\in\mcS : p(s' \vert s,a) = p'(s'\vert s,a) \]
	is true.
\end{definition}

\emph{Policy invariant reward shaping} is reward shaping while keeping optimal policies.
%such that the optimal policies are not changed.
From another point of view, this means: 
Two MDPs $\mcM$ and $\mcM'$ can create each other by policy invariant reward shaping if and only if they have the same transition structure and the same optimal policies $\Pi_*(\mcM) = \Pi_*(\mcM')$.

\subsection{Relabeling}

This subsection introduces relabeling and the associated transfer of optimal policies.

\begin{definition}
A \emph{relabeling} of an MDP $\mcMDP$ to another MDP $\mcM' \doteq (\mcS', \mcA', \Psi', \mcR, p')$ is a tuple $h\doteq(f,\mysetlong{g_s }{ s\in\mcS})$ such that
\begin{align*}
	f&\colon \mcS \longto \mcS' \\
	g_s &\colon \mcA(s) \longto \mcA'(f(s)), \quad s\in\mcS
\end{align*}
are bijective functions and the equation
\begin{align*}
	p(\tilde{s},r\vert s,a) = p'(f(\tilde{s}), r \vert  f(s), g_s(a))
\end{align*}
is true for all $(s,a)\in\Psi$, $\tilde{s}\in\mcS$ and $r\in\mcR$.
We then say that the MDP $\mcM'$ is a \emph{relabeled variant of} $\mcM$.
\end{definition}

%Since a relabelling is only a renaming of states and actions without changing any structure of the MDP, we obtain the following proposition about the relationship of optimal policies in $\mcM$ and $\mcM'$.
The following statement about the relationship of optimal policies in $\mcM$ and $\mcM'$ holds:

\begin{proposition}\label{prop - optimal policies relabelling}
	Let $\mcMDP$ and $\mcM' \doteq (\mcS', \mcA', \Psi', \mcR, p')$ be MDPs and $h\doteq(f,\mysetlong{g_s }{ s\in\mcS})$ be a relabeling of $\mcM$ to $\mcM'$.
%	Then, the mapping 
	The mapping
	\[\Pi_*(\mcM') \longto \Pi_*(\mcM), \quad \pi_* \longmapsto (\pi_* \circ h)  \]	
	with $(\pi_* \circ h) (s,a) \doteq \pi_* (g_s(a)\vert f(s))$ for all $(s,a)\in\Psi$ is bijective.
\end{proposition}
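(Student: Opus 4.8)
The plan is to show that $\pi_*\longmapsto \pi_*\circ h$ is the restriction to optimal policies of a value-preserving bijection $\Phi\colon\Pi(\mcM')\longto\Pi(\mcM)$, and then to identify its inverse explicitly via the ``inverse relabeling''. First I would check that $\pi_*\circ h$ is a well-defined policy in $\mcM$ for \emph{every} $\pi_*\in\Pi(\mcM')$, not only the optimal ones: for $s\in\mcS$,
\[ \sum_{a\in\mcA(s)}(\pi_*\circ h)(a\vert s)=\sum_{a\in\mcA(s)}\pi_*(g_s(a)\vert f(s))=\sum_{a'\in\mcA'(f(s))}\pi_*(a'\vert f(s))=1, \]
since $g_s\colon\mcA(s)\to\mcA'(f(s))$ is a bijection. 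This gives the map $\Phi$.

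The heart of the proof is the claim $V_{\Phi(\pi_*)}^\mcM(s)=V_{\pi_*}^{\mcM'}(f(s))$ for all $s\in\mcS$ and all $\pi_*\in\Pi(\mcM')$. To see this I would set $W(s)\doteq V_{\pi_*}^{\mcM'}(f(s))$, which is bounded because $\mcR$ is bounded, and plug $W$ into the right-hand side of the policy-evaluation (Bellman) equation for $\Phi(\pi_*)$ in $\mcM$. Reindexing the sums via the bijections $a'=g_s(a)$ and $\tilde s'=f^{-1}(s')$, and using the defining identity $p(\tilde s,r\vert s,a)=p'(f(\tilde s),r\vert f(s),g_s(a))$, turns this expression into the right-hand side of the Bellman equation for $\pi_*$ at state $f(s)$ in $\mcM'$, i.e. into $V_{\pi_*}^{\mcM'}(f(s))=W(s)$. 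Hence $W$ is a bounded fixed point of the evaluation operator of $\Phi(\pi_*)$, and by uniqueness of the value function as that fixed point, $W=V_{\Phi(\pi_*)}^\mcM$. The same substitution applied to the Bellman \emph{optimality} operator — now using that $g_s$ maps \emph{onto} $\mcA'(f(s))$, so the two maximizations range over corresponding sets — yields $V_*^\mcM(s)=V_*^{\mcM'}(f(s))$ for all $s$. Since $f$ is onto, it follows that $\pi_*$ is optimal in $\mcM'$ iff $\Phi(\pi_*)$ is optimal in $\mcM$; in particular $\Phi$ restricts to the map $\Pi_*(\mcM')\to\Pi_*(\mcM)$ of the statement.

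For bijectivity I would exhibit the inverse. The tuple $h^{-1}\doteq\bigl(f^{-1},\{g_{f^{-1}(s')}^{-1}:s'\in\mcS'\}\bigr)$ is a relabeling of $\mcM'$ to $\mcM$: bijectivity of its components is immediate, and its dynamic identity is precisely the dynamic identity of $h$ read with $s=f^{-1}(s')$, $\tilde s=f^{-1}(\tilde s')$, $a=g_{f^{-1}(s')}^{-1}(a')$. By the part already proven, $\Phi'\colon\pi\mapsto\pi\circ h^{-1}$ maps $\Pi_*(\mcM)$ into $\Pi_*(\mcM')$. A direct computation using $g_{f^{-1}(s')}\circ g_{f^{-1}(s')}^{-1}=\id$ and $f\circ f^{-1}=\id$ shows $(\Phi'\circ\Phi)(\pi_*)=\pi_*$ and, symmetrically, $(\Phi\circ\Phi')(\pi)=\pi$, so $\Phi$ is bijective, which proves the proposition.

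The step I expect to be the main obstacle is the identity $V_{\Phi(\pi_*)}^\mcM(s)=V_{\pi_*}^{\mcM'}(f(s))$: one must carry out the index substitutions carefully so that sums over $\mcS$ and $\mcA(s)$ transform into sums over $\mcS'$ and $\mcA'(f(s))$ with no terms gained or lost, and invoke the correct uniqueness statement for the (discounted, bounded-reward) value function. Everything else — well-definedness of $\pi_*\circ h$, that $h^{-1}$ is again a relabeling, and the two composition identities — is routine bookkeeping.
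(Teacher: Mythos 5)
Your proof is correct, and it is essentially a rigorous elaboration of the paper's own argument: the paper dismisses the proposition in one sentence (``a relabeling is only a renaming of states and actions without changing any structure''), whereas you actually carry that idea out — well-definedness of $\pi_*\circ h$ via bijectivity of $g_s$, the value identity $V_{\Phi(\pi_*)}^{\mcM}(s)=V_{\pi_*}^{\mcM'}(f(s))$ through the uniqueness of the bounded fixed point of the evaluation operator, and bijectivity via the explicit inverse relabeling $h^{-1}$. No gaps; your version is strictly more complete than the one in the paper.
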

\begin{proof}
	The proposition follows from the fact that the relabeling $h$ is only a renaming of states and actions without changing any structure of the MDP.
\end{proof}

\section{Hidden Symmetries}\label{section - hidden symmetries}

This subsection introduces hidden symmetries of MDPs and shows that it is sufficient to solve the associated quotient MDP of a hidden symmetry instead of the original MDP.

%We begin by defining equivalence for MDPs.
We start with the definition of equivalence for MDPs.

\begin{definition}\label{definition - equivalente MDPs}
	We call two MDPs $\mcM$ and $\mcM'$ \emph{equivalent} if
	\begin{enumerate}
		\item $\mcM'$ is just a relabeled variant of $\mcM$, or
		\label{item - equivalent MDP 1}
		
		\item $\mcM$ and $\mcM'$ have the same transition structure and the same optimal policies (the reward structure can be different!), or
		\label{item - equivalent MDP 2}
		
		\item 
		it exists an MDP $\tilde{\mcM}$ such that $\tilde{\mcM}$ is a relabeled variant of $\mcM$, and $\tilde{\mcM}$ and $\mcM'$ have the same transition structure and the same optimal policies, or
		\label{item - equivalent MDP 3}
		
		\item		
		it exists an MDP $\tilde{\mcM}$ such that
		$\mcM$ and $\tilde{\mcM}$ have the same transition structure and the same optimal policies, and $\mcM'$ is a relabeled variant of $\tilde{\mcM}$.	
		\label{item - equivalent MDP 4}		
	\end{enumerate}
	(Statements \ref{item - equivalent MDP 3} and \ref{item - equivalent MDP 4} result from the combination of \ref{item - equivalent MDP 1} and \ref{item - equivalent MDP 2}.)
	
	In Statement \ref{item - equivalent MDP 1}, \ref{item - equivalent MDP 3} and \ref{item - equivalent MDP 4}, there is a relabeled variant. Thus, there is a relabeling of MDPs $h$.
	In Statement \ref{item - equivalent MDP 2}, we set $h$ trivial. 
	This relabeling $h$ is a relabeling of the set of state-action pairs of $\mcM$ to the set of state-action pairs of $\mcM'$.
	Therefore, we call $h$ \emph{relabeling of the sets of state-action pairs in respect to the equivalent definition of MDPs $\mcM$ and $\mcM'$}.
	In general, $h$ is no relabeling of $\mcM$ to $\mcM'$.
\end{definition}

Using this definition, we can introduce hidden symmetries.

\begin{definition}
	Let $\mcM$ be an MDP.
	A tuple $(\mcM', \sim)$ is called \emph{hidden symmetry of $\mcM$} if $\mcM$ and $\mcM'$ are equivalent and $\sim$ is a visible symmetry of $\mcM'$.
\end{definition}

Equivalent MDPs have the following relationship between their optimal policies.

\begin{lemma}\label{lemma - equivalent MDPs have relabbeling of Psi to Psi' such that optimal policies are preserved}
	Let $\mcMDP$ and $\mcM' \doteq (\mcS', \mcA',\Psi',\mcR',p')$ be equivalent MDPs
	%	Then, there exists a relabelling of $\Psi$ to $\Psi'$ $h \doteq (f,\mysetlong{g_s}{ s\in\mcS})$ such that the mapping
	%	\[ \Pi_*(\mcM') \longto \Pi_*(\mcM), \quad \pi_* \longmapsto (\pi_*\circ h ) \]
	%	is bijective.
	%	Recall that $(\pi_*\circ h )(s,a)\doteq \pi_*(g_s(a) \vert f(s))$ holds for all $(s,a)\in\Psi$.	
	%	
	and let $h \doteq (f, \mysetlong{g_s}{s\in\mcS})$ be a relabeling of $\Psi$ to $\Psi'$ in respect to the equivalent definition of the MDPs $\mcM$ and $\mcM'$.
	Then, the mapping
	\[ \Pi_*(\mcM') \longto \Pi_*(\mcM), \quad \pi_* \longmapsto (\pi_*\circ h ) \]
	is bijective.
	Recall that $(\pi_*\circ h )(s,a)\doteq \pi_*(g_s(a) \vert f(s))$ holds.		
\end{lemma}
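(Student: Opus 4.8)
The plan is to run a case analysis that mirrors the four cases in Definition~\ref{definition - equivalente MDPs}, reducing each case either to Proposition~\ref{prop - optimal policies relabelling} or to a triviality, and then, in the two ``combined'' cases, to compose the resulting bijections.

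In Case~\ref{item - equivalent MDP 1}, $\mcM'$ is a relabeled variant of $\mcM$ and $h$ is the corresponding relabeling; this is exactly the hypothesis of Proposition~\ref{prop - optimal policies relabelling}, which already asserts that $\pi_* \mapsto \pi_* \circ h$ is a bijection $\Pi_*(\mcM') \to \Pi_*(\mcM)$, so there is nothing further to prove. In Case~\ref{item - equivalent MDP 2}, $\mcM$ and $\mcM'$ have the same transition structure --- in particular the same state set, action set and set of admissible pairs, so $\Psi = \Psi'$ --- and $\Pi_*(\mcM) = \Pi_*(\mcM')$. Here $h$ is the trivial relabeling (identity on states and on each $\mcA(s)$), so $(\pi_* \circ h)(s,a) = \pi_*(a \mid s) = \pi_*(s,a)$, i.e.\ $\pi_* \circ h = \pi_*$; hence the map in question is the identity on $\Pi_*(\mcM') = \Pi_*(\mcM)$ and is trivially bijective.

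Cases~\ref{item - equivalent MDP 3} and~\ref{item - equivalent MDP 4} are handled by gluing the two previous arguments through the intermediate MDP $\tilde{\mcM}$; I would write out Case~\ref{item - equivalent MDP 3} and note that Case~\ref{item - equivalent MDP 4} is symmetric. In Case~\ref{item - equivalent MDP 3}, $\tilde{\mcM}$ is a relabeled variant of $\mcM$ with relabeling $h$, while $\tilde{\mcM}$ and $\mcM'$ have the same transition structure and the same optimal policies; as in Case~\ref{item - equivalent MDP 2} this forces $\Psi' = \Psi$ (the common admissible-pair set of $\tilde{\mcM}$ and $\mcM'$) and $\Pi_*(\tilde{\mcM}) = \Pi_*(\mcM')$. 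Applying Proposition~\ref{prop - optimal policies relabelling} to the relabeling $h$ of $\mcM$ to $\tilde{\mcM}$ gives that $\pi_* \mapsto \pi_* \circ h$ is a bijection $\Pi_*(\tilde{\mcM}) \to \Pi_*(\mcM)$, and after identifying $\Pi_*(\tilde{\mcM})$ with $\Pi_*(\mcM')$ this is precisely the claimed map $\Pi_*(\mcM') \to \Pi_*(\mcM)$, $\pi_* \mapsto \pi_* \circ h$. Case~\ref{item - equivalent MDP 4} runs the same way, now using $\Pi_*(\mcM) = \Pi_*(\tilde{\mcM})$ from the same-transition-structure part together with Proposition~\ref{prop - optimal policies relabelling} applied to the relabeling $h$ of $\tilde{\mcM}$ to $\mcM'$.

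The delicate point is not any computation but the bookkeeping in the last two cases: one must check that the domain of $h$ really is $\Psi$ and its codomain really is $\Psi'$ (this is where ``same transition structure $\Rightarrow$ equal state/action/admissible-pair sets'' is used), and that the two sets of optimal policies being identified are literally the same set of functions on that common domain, so that the composition of the two bijections is genuinely the single map $\pi_* \mapsto \pi_* \circ h$ and not merely something canonically isomorphic to it. Once this identification is spelled out, the lemma follows.
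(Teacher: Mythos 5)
Your proposal is correct and follows essentially the same route as the paper's own proof: a case analysis over the four statements of Definition~\ref{definition - equivalente MDPs}, invoking Proposition~\ref{prop - optimal policies relabelling} in the relabeling case, observing triviality in the same-transition-structure case, and composing for the mixed cases. If anything, your treatment of Cases~\ref{item - equivalent MDP 3} and~\ref{item - equivalent MDP 4} is more careful than the paper's, which dismisses them in one sentence as ``combinations'' of the first two.
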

\begin{proof}
%	Let $\mcMDP$ and $\mcM' \doteq (\mcS', \mcA',\Psi',\mcR',p')$ be equivalent MDPs. Furthermore, let $h$ be the relabelling of $\Psi$ to $\Psi'$ in respect to the equivalent definition of the MDPs $\mcM$ and $\mcM'$.
	We define the map
	\[ \Phi \colon \Pi_*(\mcM') \longto \Pi_*(\mcM), \quad \pi_* \longmapsto (\pi_*\circ h ). \] 
	We have to show that $\Phi$ is bijective.
	
	Since the MDPs $\mcM$ and $\mcM'$ are equivalent, one of the four statements of Definition \ref{definition - equivalente MDPs} holds.
	
	If Statement \ref{item - equivalent MDP 1} holds, then $\mcM'$ is a relabeled variant of $\mcM$ and $h$ is a relabeling of these MDPs. 
	By applying Proposition \ref{prop - optimal policies relabelling}, we obtain that
	$\Phi$ is bijective.

	If Statement \ref{item - equivalent MDP 2} holds, then $\mcM$ and $\mcM'$ have the same optimal policies, the equation $\Psi=\Psi'$ is true and $h$ is trivial. 
	Hence, the map $\Phi$ is bijective.
	
	Since the Statements \ref{item - equivalent MDP 3} and \ref{item - equivalent MDP 4} are combinations of the statements \ref{item - equivalent MDP 1} and \ref{item - equivalent MDP 2},
	the map $\Phi$ is bijective.		
\end{proof}

Now, we can state the main theorem of this paper.

\begin{theorem}\label{thm - hidden symmetry pullback of optimal policy}
	Let $\mcMDP$ and $\mcM' \doteq (\mcS', \mcA', \Psi', \mcR', p')$ be MDPs.
	Furthermore, let $(\mcM', \sim)$ be a hidden symmetry of $\mcM$.	
	Every optimal policy in the quotient MDP $\quotsimMprime$ can be pulled back to an optimal policy in the MDP $\mcM$.
	
	To be more precise: 
	Let $h=(f,\mysetlong{g_s }{ s\in\mcS})$ be a relabeling from $\Psi$ to $\Psi'$ in respect to the equivalent definition of the MDPs $\mcM$ and $\mcM'$.
	The associated pullback of a policy $\pi\in\Pi(\quotsimMprime)$ to a policy $\pi' \in\Pi(\mcM)$ is defined as
	\begin{align*}
		\pi'(s,a) 	
		\doteq 
		\frac{\pi([(f(s),g_s(a))]_\Psi)}{N_\Psi(f(s),g_s(a))}
	\end{align*}
	for all $(s,a)\in\Psi$.
	If $\pi$ is optimal in $\quotsimMprime$, then $\pi'$ is optimal in $\mcM$.
	
	Moreover, if $\sim$ is simple, the pullback simplifies to $\pi'(s,a) \doteq \pi(g_s(a)\vert [f(s)])$.
\end{theorem}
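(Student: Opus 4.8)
The plan is to factor the pullback through the two layers already established in the excerpt: the relabeling/equivalence layer (Lemma \ref{lemma - equivalent MDPs have relabbeling of Psi to Psi' such that optimal policies are preserved}) and the visible-symmetry quotient layer (Theorem \ref{thm - quotient MDP pullback of optimal policies}). Concretely, I would compose two maps. First, apply Theorem \ref{thm - quotient MDP pullback of optimal policies} to the MDP $\mcM'$ and its visible symmetry $\sim$: this sends an optimal policy $\pi \in \Pi_*(\quotsimMprime)$ to an optimal policy $\tilde\pi \in \Pi_*(\mcM')$ given by $\tilde\pi(s',a') = \pi([(s',a')]_\Psi)/N_\Psi(s',a')$. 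Second, apply Lemma \ref{lemma - equivalent MDPs have relabbeling of Psi to Psi' such that optimal policies are preserved} to the equivalent pair $\mcM$, $\mcM'$ with the relabeling $h = (f, \{g_s\})$: this sends $\tilde\pi \in \Pi_*(\mcM')$ to $\tilde\pi \circ h \in \Pi_*(\mcM)$, where $(\tilde\pi \circ h)(s,a) = \tilde\pi(g_s(a) \mid f(s))$.

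The main body of the proof is then just unwinding the composition. Substituting the formula for $\tilde\pi$ into $(\tilde\pi\circ h)(s,a)$ and using $\tilde\pi(a' \mid s') = \tilde\pi((s',a'))$ with $(s',a') = (f(s),g_s(a))$, I get
\begin{align*}
	\pi'(s,a) = (\tilde\pi\circ h)(s,a) = \tilde\pi\bigl((f(s),g_s(a))\bigr) = \frac{\pi([(f(s),g_s(a))]_\Psi)}{N_\Psi(f(s),g_s(a))},
\end{align*}
which is exactly the claimed formula. Optimality of $\pi'$ in $\mcM$ is immediate: $\pi$ optimal in $\quotsimMprime$ gives $\tilde\pi$ optimal in $\mcM'$ by Theorem \ref{thm - quotient MDP pullback of optimal policies}, and then $\tilde\pi$ optimal in $\mcM'$ gives $\pi' = \tilde\pi\circ h$ optimal in $\mcM$ because the map of Lemma \ref{lemma - equivalent MDPs have relabbeling of Psi to Psi' such that optimal policies are preserved} lands in $\Pi_*(\mcM)$. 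For the ``moreover'' part, when $\sim$ is simple every $N_\Psi$ equals $1$ and $[(s',a')]_\Psi$ can be written as $([s']_{\mcS},a')$, so $\tilde\pi(s',a') = \pi(a' \mid [s'])$ and therefore $\pi'(s,a) = \pi(g_s(a) \mid [f(s)])$.

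I would also include a sentence checking that the composition is well-typed: $h$ relabels $\Psi$ to $\Psi'$, so $(f(s),g_s(a)) \in \Psi'$ whenever $(s,a)\in\Psi$, hence $[(f(s),g_s(a))]_\Psi$ is a genuine element of $\ol{\Psi'} = \Psi'/\!\!\sim_\Psi$ and $\pi$ can be evaluated there; likewise $g_s(a) \in \mcA'(f(s))$ so the simplified form is meaningful. The one point that needs a word of care — and what I'd flag as the only real obstacle — is that in Definition \ref{definition - equivalente MDPs} the intermediate MDP $\tilde{\mcM}$ (cases 3 and 4) may sit on either side of the relabeling, so one should note that $h$ being ``a relabeling of $\Psi$ to $\Psi'$ in respect to the equivalent definition'' is precisely the object both Lemma \ref{lemma - equivalent MDPs have relabbeling of Psi to Psi' such that optimal policies are preserved} and this theorem are phrased in terms of; since the lemma already absorbs that case analysis, nothing further is needed here and the proof is a clean two-step composition.
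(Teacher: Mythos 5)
Your proof is correct and follows exactly the paper's approach: the paper's own proof is the one-line observation that the theorem follows by combining Theorem \ref{thm - quotient MDP pullback of optimal policies} with Lemma \ref{lemma - equivalent MDPs have relabbeling of Psi to Psi' such that optimal policies are preserved}, which is precisely the two-step composition you carry out. Your version merely makes the composition and the resulting formula explicit, which is a welcome addition but not a different route.
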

\begin{proof}
	By combining Theorem \ref{thm - quotient MDP pullback of optimal policies} and Lemma \ref{lemma - equivalent MDPs have relabbeling of Psi to Psi' such that optimal policies are preserved}, this theorem follows.
\end{proof}
	
Theorem \ref{thm - hidden symmetry pullback of optimal policy} shows that it is sufficient to solve the associated quotient MDP of an hidden symmetry instead of the original MDP.
It indirectly provides an injective mapping from $\Pi_*(\quotsimMprime)$ to $\Pi_*(\mcM)$, which completes the reduction using hidden symmetries.

\section{Reduction of the Multi-Period Newsvendor Problem using a Hidden Symmetry}
\label{section - reduction of the m-p newsvendor problem}

This section applies the concept of model reduction using hidden symmetries to the multi-period newsvendor problem. 
The multi-period newsvendor problem is an easy-to-understand problem that has no visible symmetries but a hidden symmetry.
Therefore, it is well-suited to show that hidden symmetries can reduce problems that cannot be reduced by the common methods of model reduction using visible symmetries.
In addition, we present a general approach to reveal a hidden symmetry, which is the crucial part of this new concept.

This section is organized as follows. Subsection \ref{subsection - The Newsvendor Problem as an MDP} introduces the multi-period newsvendor problem and models it as an MDP.
Subsection \ref{subsection - applying the procedure} applies the procedure of reduction to it and briefly states the advantages of the reduction.
Finally, Subsection \ref{subsection - 5day newsvendor} applies the same procedure of reduction to the multi-period newsvendor problem with a 5-day cycle (i.e., the deviations depend on a 5-day cycle).

%Note that this problem actually does not require such a powerful concept. 

\subsection{Modeling the Multi-Period Newsvendor Problem} \label{subsection - The Newsvendor Problem as an MDP}

\paragraph*{Problem Description}
The multi-period newsvendor problem is a mathematical model that represents the situation of a newsvendor over an infinite number of days:
A newsvendor has to decide, based on a forecast, how many newspapers they are going to purchase for the next day. However, the real demand may differ from the forecast by a deviation and all unsold newspapers are worth nothing. 
These deviations behave independent and identically at each time step.
The newspapers have a fixed purchase and sale price. 		
The aim is now to determine the optimal purchase quantity for each day such that the newsvendor maximizes their expected profit.
So one can say, the multi-period newsvendor problem is infinitely often the newsvendor problem with different forecasts for each time step but with the same deviation distribution at each time step.

\paragraph*{Notation}
%For the multi-period newsvendor problem we use the following notation: 
%
	The \emph{time steps} or \emph{days} are denoted by $t\in \IN_0$.
	The \emph{deviation} at time step $t$ is denoted by $\delta_t\in \IZ$. 
	The $(\delta_t)_{t\in \IN_0}$ are i.i.d. random variables given by a probability distribution such that for all $t\in \IN_0$ the value $\delta_t$ is bounded. We denote the possible values of $\delta_t$ by $\IZ_\delta \subseteq \IZ$. 
	The \emph{forecast} at time step $t$ is a natural number $F_t$. 
	The forecasts are bounded such that there exists a constant $C$ with $0\leq F_t +\delta_t <C$ for all $t\in\IN_0$.
	The \emph{demand} at a time step $t$ is given by the sum 
	$D_t \doteq F_t + \delta_t.$
	The \emph{quantity of newspapers to be purchased at a time step $t$} is the number of \emph{ordered newspaper}, which is denoted by $O_t$. 
	We allow only $O_t \in F_t + \IZ_\delta$ because that is the range of the demand at time step $t$.
As reward, we simply use the daily profit:
We denote the \emph{purchase cost per newspaper} by $C_p \in \IR^+$ and the \emph{selling price per newspaper} by $C_s\in \IR^+$.
To form a reasonable model, the purchase cost must be less than the selling price, so we request $C_p < C_s$.		
The profit of the newsvendor at time step $t$ with order $O_t = a$ is given by the \emph{reward function} 
\begin{align*}						
	\reward_t(a)	\doteq  \min\myset{D_t, a} C_s - a C_p.
\end{align*} 
To be able to consider an arbitrary deviation in the reward function, we define the function
\begin{align*}						
	\reward_t(a,d)	\doteq  \min\myset{F_t + d, a)} C_s - a C_p,
\end{align*}
where $d\in \IZ_\delta$ describes the arbitrary deviation and $a$ the order of newspaper.
Obviously the relation $\reward_t(a) = \reward_t(a,\delta_t)$ holds.

The objective is to specify for all time steps $t$ an order quantity $O_t=a_t$ such that 
the expected profit $\expected{ \reward_t(a_t)}$ is maximized. 
%This is equivalent to the statement that the expected sum of discounted rewards
%\begin{align} \label{newsvendor problem objective}
%	\expected{ \sum_{t=0}^{\infty} \gamma^t \reward_t(a_t)}
%\end{align}
%is maximized. 

\begin{remark}
	Instead of using the daily profit as the reward function, any reward function that satisfies the following \emph{reward function condition} can be used: 
	
	Let $t$ be any time step and $\reward_t \colon (F_t+\IZ_\delta) \times \IZ_\delta \to \IR$ be the reward function at that time step of the multi-period newsvendor problem. 
	There exists an $x\in \IZ_\delta$ such that the statement
	\begin{align*}
		\forall t\in \IN_0:	x + F_t \in \argmax_{a \in F_t+\IZ_\delta} \reward_t(a,0)
	\end{align*}	
	is true
	and for all $t,l\in \IN_0$, $a\in F_t+\IZ_\delta$, $d\in\IZ_\delta$ the equation
	\begin{align*} 		
		\reward_t(a,d) - \reward_t(x + F_t,d)
		=
		\reward_{l}(a - F_t + F_{l},d) - \reward_{l}(x + F_{l},d)
	\end{align*}
	holds.
	
	For example, reward functions that punish deficits and surpluses more individually can also be used, as long as they fulfill the condition above. This is relevant, if for instance a certain level of safety in the sale is desired.	
\end{remark}

\paragraph*{The Problem as an MDP}
The MDP of the multi-period newsvendor problem $\mcMDP$ is defined as follows:
The set of states is given by $\mcS \doteq \mysetlong{s_t }{ t\in \IN_0}$, where $s_t$ represents the current time step $t$.		
The admissible actions in state $s_t$ are the possible orders of newspapers at time step $t$ and thus given by $\mcA(s_t) \doteq  F_t+ \IZ_\delta$.
This implies $\mcA \doteq \bigcup_{s\in\mcS} \mcA(s)$ and $\Psi \doteq \mysetlong{(s,a) }{ s\in \mcS, a\in \mcA(s)}.$				 
The set of rewards is the bounded set $\mcR \doteq \bigcup_{t\in \IN_0} \reward_t(F_t+\IZ_\delta, \IZ_\delta)$.		
For all  $r\in\mcR$, $t\in \IN_0$, $a \in\mcA(s_t)$, we define the set 
$$D(r,t,a) \doteq \mysetlong{d \in \IZ_\delta }{ r= \reward_t(a,d)},$$ 
which contains all possible deviation such that the reward $r$ is received in the state $s_t$ under action $a$.	
Using this set, the dynamic $p \colon \mcS \times \mcR \times \Psi \to [0,1]$ is defined as 
\begin{align*}
	p(s_l,r\vert s_t,a) \doteq 
	\begin{cases}
		\sum_{d\in D(r,t,a)} \Pr\myset{ \delta_t = d}, & \text{if }  l = t+1, \\
		0, &  \text{else.}
	\end{cases}
\end{align*}	
Hence, the state transition function $p \colon \mcS \times \Psi \to [0,1]$ is given by
\begin{align*}
	p(s_{l}\vert s_t,a) = 
	\begin{cases}
		1, & \text{if } l = t+1, \\
		0, & \text{else.}
	\end{cases}
\end{align*}

The objective in the multi-period newsvendor problem is to specify for all $t$ an order $a_t$ that maximizes $\expected{\reward_t(a_t)}$. This corresponds to finding an optimal policy in $\mcM$ with discount factor $\gamma=0$.
Hence, we set the discount factor to 0. Therefore, the value function is simply given by
\[ 	V_\pi^\mcM(s_t) = \expectedlong[\pi,p]{R_{1} }{ S_0=s_t} \]
for all $\pi\in\Pi(\mcM)$, $s_t\in\mcS$.

\begin{proposition}
The multi-period newsvendor problem $\mcM$ has no visible symmetries in general.
\end{proposition}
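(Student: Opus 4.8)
I would begin by fixing the precise reading of ``in general'': the claim is that under generic problem data the only visible symmetry of $\mcM$ is the trivial one (the equality relations on $\mcS$ and on $\Psi$), whose quotient MDP is $\mcM$ itself, so that no visible symmetry yields a genuine reduction. Concretely I would assume that the forecasts $(F_t)_{t\in\IN_0}$ are pairwise distinct --- the natural situation for a true multi-period problem --- and that the law $\mu$ of $\delta_t$ is \emph{dissociated}, i.e.\ $\mu(A)=\mu(B)$ implies $A=B$ for all $A,B\subseteq\IZ_\delta$, which holds outside a measure-zero set of deviation distributions; the proposition genuinely fails without such hypotheses (for instance for an eventually periodic forecast sequence). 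The first step of the proof exploits that $\mcM$ is \emph{layered}: from any state $s_t$ the transition is deterministic into $s_{t+1}$, regardless of the action. Hence if $(s_t,a_1)\sim_\Psi(s_u,a_2)$, the second symmetry axiom gives $s_t\sim_\mcS s_u$ and $p(X,r\vert s_t,a_1)=p(X,r\vert s_u,a_2)$ for all blocks $X$ and all $r$; summing over $r$ forces $s_{t+1}$ and $s_{u+1}$ into a common block $X_0$, and taking $X=X_0$ --- on which all the one-step mass from $(s_t,a_1)$ and from $(s_u,a_2)$ sits --- reduces the condition to
\[
	\Pr\{\reward_t(a_1,\delta_t)=r\}=\Pr\{\reward_u(a_2,\delta_t)=r\}\qquad\text{for all }r\in\mcR .
\]

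It then remains to decide when two of the reward maps $\reward_t(F_t+j,\cdot)\colon\IZ_\delta\to\IR$ induce the same distribution. Writing out the $\min$, $\reward_t(F_t+j,d)=F_t(C_s-C_p)-jC_p+\min\{d,j\}\,C_s$. Under dissociation, equality of the two induced distributions forces, value by value, equality of the corresponding preimages in $\IZ_\delta$, hence $\reward_t(F_t+j_1,\cdot)=\reward_u(F_u+j_2,\cdot)$ as functions on $\IZ_\delta$, where $a_1=F_t+j_1$ and $a_2=F_u+j_2$ with $j_1,j_2\in\IZ_\delta$. Evaluating this identity at $d=\min\IZ_\delta$ and at $d=\max\IZ_\delta$ yields two affine relations among $F_t,F_u,j_1,j_2$ which, using $C_s\ne C_p$ and $C_s\ne 0$, give $j_1=j_2$ and $F_t=F_u$. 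If $t\ne u$ this contradicts distinctness of the forecasts, so $t=u$, and then $a_1=F_t+j_1=F_u+j_2=a_2$. Thus $\sim_\Psi$ is the equality relation on $\Psi$, and the first symmetry axiom upgrades this to $\sim_\mcS$ being equality on $\mcS$: from $s_t\sim_\mcS s_u$ it produces an $a'$ with $(s_t,a)\sim_\Psi(s_u,a')$, forcing $s_t=s_u$. Hence $\sim$ is trivial, which is the assertion.

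I do not expect a genuine obstacle, but the points to get right are: first, stating the layered-structure reduction cleanly and justifying the small ``dissociation $\Rightarrow$ equal reward functions'' fact, since these together carry the argument; and second, the elementary but slightly fiddly bookkeeping with the $\min$ when extracting $F_t=F_u$ and $j_1=j_2$ from the evaluations at the extreme deviations $\min\IZ_\delta$ and $\max\IZ_\delta$. It is also worth phrasing ``in general'' honestly, making explicit that the statement concerns generic data and indicating the degenerate cases --- repeated or eventually periodic forecasts, or a highly symmetric noise law --- in which nontrivial visible symmetries do occur.
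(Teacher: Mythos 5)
Your overall strategy is the right one (a nontrivial visible symmetry forces two forecasts to coincide, which you then contradict with a genericity assumption), and your treatment of the step from ``equal reward \emph{distributions}'' to ``equal reward \emph{functions}'' via a dissociation hypothesis on the noise law is actually more careful than the paper, which passes over this point silently. But there is a genuine gap: your genericity hypothesis --- that the forecasts $(F_t)_{t\in\IN_0}$ are pairwise distinct --- is incompatible with the model. The paper requires $0\leq F_t+\delta_t<C$ for all $t$, so the natural numbers $F_t$ range over a fixed finite set while $t$ ranges over an infinite index set; by pigeonhole some forecast value repeats infinitely often in \emph{every} instance. Consequently the statement you prove has an unsatisfiable hypothesis, and deriving ``$F_t=F_u$, contradiction with distinctness'' never actually rules anything out. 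Mere coincidence of two forecasts $F_t=F_u$ with $t\neq u$ does not by itself create a visible symmetry, so you cannot repair this just by weakening ``pairwise distinct'' to ``no two equal''; you need to extract a stronger consequence from the assumed symmetry.

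The missing ingredient is the propagation step, which is what the paper's proof uses and which your own ``layered structure'' observation already sets up. Since all one-step mass from $s_t$ sits on $s_{t+1}$ and all one-step mass from $s_u$ sits on $s_{u+1}$, the block condition $p(X,r\vert s_t,a_1)=p(X,r\vert s_u,a_2)$ forces $s_{t+1}\sim_\mcS s_{u+1}$, and by induction $s_{t+k}\sim_\mcS s_{u+k}$ for all $k$. Combining this with your reward-identification argument at every layer yields $F_{t+k}=F_{u+k}$ for all $k\geq 0$, i.e.\ the forecast sequence is eventually periodic with period $u-t$. That is the correct (and satisfiable) obstruction: ``in general'' an infinite bounded forecast sequence is not eventually periodic, even though its values necessarily repeat. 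Your proof stops one step short of this; once you add the induction on successors, your finer analysis of the reward maps slots in and the argument closes.
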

\begin{proof}
	Assume that $\mcM$ has a visible symmetry.
	Then there exist states $s_t$ and $s_l$ with $t\neq l$ which are symmetric to each other. This implies that $s_t$ and $s_l$ have the same reward function; thus $F_t= F_l$ holds. 
	Furthermore, the visible symmetry implies that the states $s_{t+k}$ and $s_{l+k}$ have the same reward function for all $k\in\IN$; thus $F_{t+k} = F_{l+k}$ holds for all $k\in\IN$.
	This is generally not true.
\end{proof}

\subsection{Applying the Procedure to Reduction}\label{subsection - applying the procedure}

The multi-period newsvendor problem was modeled as the MDP $\mcM$ and it has no visible symmetry. 
This subsection shows that the MDP $\mcM$ has a hidden symmetry and how to reduce the MDP with it.

\paragraph*{Outline of the Procedure}

The steps of the procedure of revealing and exploiting the hidden symmetry are as follows:
\begin{enumerate}
	\item 
	Solve the \emph{deterministic case} of the multi-period newsvendor problem, which is the problem without stochastics.
	The solution is a policy, which we denote by $\optdet$.
	
	\item Do a policy invariant reward shaping using the policy $\optdet$.
	
	\item Do a relabeling of the actions using the policy $\optdet$.
	
	\item Determine a hidden symmetry.
	
	\item Create the related quotient MDP. 
	
	\item Determine how an optimal policy in the quotient MDP is pulled back to the original MDP.
\end{enumerate}

\paragraph*{Step 1. Solution to the Deterministic Case}
In the \emph{deterministic case}, we assume $\delta_t =0$ for all $t\in\IN_0$.
Thus, the demand always equals the forecast 
(i.e. $F_t=D_t$),
and the reward function is given by
\begin{align*}
	\reward_t(a) = \min \myset{F_t,a}  C_s - aC_p.
\end{align*}
for all $t\in I$, $a\in\mcA(s_t)$.
Clearly, the reward is maximum for $a=F_t$.
Therefore, the optimal policy in the deterministic case is $\optdet\colon \mcS \to \mcA$ with $\optdet(s_t)= F_t$ for all $t\in \IN_0$.	
We will use this policy to align rewards and actions with it in a certain way.

\paragraph*{Step 2. Policy Invariant Reward Shaping}

We will change the reward structure of $\mcM\doteq (\mcS,\mcA,\Psi,\mcR,p)$ as follows:
If the decision maker is in the state $s_t\in \mcS$ and takes an action $a\in\mcA(s_t)$, the new reward received should be
\begin{align*}	
	\reward_t(a) - \reward_t(\optdet(s_t))
\end{align*} 
instead of $\reward_t(a)$. 
Since the deviation $\delta_t$ is hidden in the function $\reward_t \colon \mcA(s_t) \to \mcR$, the new reward in the state $s_t$ is modeled by 
\begin{align*}
	\mcA(s_t) \times \IZ_\delta & \longto \mysetlong{r_1 - r_2 }{ r_1,r_2 \in \mcR} \subseteq \IR \\
	(a,d) &\longmapsto \reward_t(a,d) - \reward_t(\optdet(s_t),d),
\end{align*}
where this new reward occurs with probability $\Pr\myset{\delta_t=d}$.

To define the MDP $\mcM'$ with this new reward structure, we need the following sets beforehand:
For all $t\in \IN_0$, $a\in  \mcA(s_t)$ and $r \in \mysetlong{r_1 - r_2}{ r_1,r_2 \in \mcR}$ we define the set
\begin{align*}
	D'(r, t, a) \doteq \mysetlong{ d\in \IZ_\delta }{ r = \reward_t(a,d) - \reward_t(\optdet(s_t),d) },
\end{align*}
which contains all possible deviations for which the new reward $r$ is received in the state $s_t$ under action $a$.

We define the MDP $\mcM'\doteq (\mcS,\mcA,\Psi, \mcR', p')$ with set of rewards
$\mcR' \doteq \mysetlong{r_1-r_2 }{ r_1,r_2\in \mcR} $
and the dynamic $p'\colon \mcS \times \mcR' \times \Psi\to [0,1]$ is given by
\begin{align*}
	p'(s_l,r\vert s_t,a) &\doteq 
	\begin{cases}
		\sum_{d\in D'(r,t,a)} \Pr\myset{\delta_t = d}, & \text{if }  l = t+1, \\
		0, &  \text{else.}
	\end{cases}
\end{align*}	
	
Hence, the state transition function $p'(s_l \vert s_t,a)$ is equal to 1 if  $l = t+1$, and 0 otherwise. Thus, $\mcM$ and $\mcM'$ have the same transition structure.

Since policies are defined over the set of state-action pairs, the two MDPs $\mcM$ and $\mcM'$ have the same set of policies; i.e. $\Pi(\mcM)=\Pi(\mcM')$.

\begin{lemma}
	The two MDPs $\mcM$ and $\mcM'$ have the same optimal policies. 
\end{lemma}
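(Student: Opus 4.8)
The plan is to exploit that the discount factor is $\gamma = 0$, so the value function of any policy at a state $s_t$ is just the expected immediate reward collected in $s_t$, and this depends only on how the policy acts in $s_t$. Concretely, for every $\pi \in \Pi(\mcM) = \Pi(\mcM')$ (recall these policy sets coincide) and every $s_t \in \mcS$ one has
\begin{align*}
	V_\pi^\mcM(s_t) = \sum_{a\in\mcA(s_t)} \pi(a\vert s_t)\, R^\mcM(s_t,a),
	\qquad
	R^\mcM(s_t,a) = \expected{\reward_t(a,\delta_t)},
\end{align*}
which follows by unwinding the definition of $p$.

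First I would record the analogous identity in $\mcM'$ and compute the reward shift. Using the definition of $p'$,
\begin{align*}
	R^{\mcM'}(s_t,a) = \expected{\reward_t(a,\delta_t) - \reward_t(\optdet(s_t),\delta_t)} = R^\mcM(s_t,a) - c_t,
\end{align*}
where $c_t \doteq \expected{\reward_t(\optdet(s_t),\delta_t)}$ is a finite constant (finiteness holds since $\IZ_\delta$ is bounded and $\mcR$ is bounded) depending only on the time step $t$ — crucially, not on the action $a$ nor on the policy, because $\optdet$ is a fixed deterministic policy, so $\optdet(s_t)$ is one fixed action. Next I would propagate this to value functions: multiplying by $\pi(a\vert s_t)$ and summing over $a\in\mcA(s_t)$ gives $V_\pi^{\mcM'}(s_t) = V_\pi^\mcM(s_t) - c_t$ for every $\pi$ and every $s_t$. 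Taking the supremum over $\pi\in\Pi(\mcM)=\Pi(\mcM')$, and using that $c_t$ does not depend on $\pi$, yields $V_*^{\mcM'}(s_t) = V_*^\mcM(s_t) - c_t$ for all $t$. Combining the last two displays, $V_\pi^{\mcM'}(s_t) = V_*^{\mcM'}(s_t)$ holds for all $s_t$ if and only if $V_\pi^\mcM(s_t) = V_*^\mcM(s_t)$ holds for all $s_t$; that is, $\pi$ is optimal in $\mcM'$ exactly when it is optimal in $\mcM$, hence $\Pi_*(\mcM) = \Pi_*(\mcM')$.

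There is no serious obstacle here; the only points needing care are bookkeeping. One must check that the additive constant is genuinely independent of the action and of the policy — this is exactly where it matters that we shift by the reward of the \emph{fixed deterministic} policy $\optdet$ rather than by something action-dependent — and one must note that finiteness of $c_t$ is what makes the identity $V_\pi^{\mcM'} = V_\pi^\mcM - c_t$ meaningful; both are guaranteed by boundedness of $\mcR$ together with finiteness of $\IZ_\delta$. It is worth remarking that this is precisely potential-based reward shaping in the sense of \cite{Ng1999} specialized to $\gamma = 0$, with the per-state shift playing the role of the potential, which is the structural reason optimal policies are preserved.
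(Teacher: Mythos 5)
Your argument is correct and is essentially the paper's own proof: both exploit $\gamma=0$ to observe that $V_\pi^{\mcM'}(s_t)$ differs from $V_\pi^{\mcM}(s_t)$ by the policy- and action-independent constant $c_t = \expected[\optdet,p]{R_1 \vert S_0=s_t}$, so the maximizers coincide. You merely spell out the intermediate per-action bookkeeping that the paper leaves implicit.
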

\begin{proof}
Because the discount factor is zero, an optimal policy $\pi$ in $\mcM$ maximizes 
$V_\pi^\mcM(s_t) \doteq \expected[\pi, p]{R_{1} \vert S_0=s_t}$  for all $s_t \in\mcS$.
Therefore, it also maximizes the term
\begin{align}\label{temp -- maximize term dings}
	\expected[\pi, p]{R_{1} \vert S_0=s_t} - \expected[\optdet, p]{R_{1} \vert S_0=s_t}
\end{align}
for all $s_t\in\mcS$.
By definition of $\mcM'$, the term (\ref{temp -- maximize term dings}) is equal to 
$\expected[\pi, p']{R_{1} \vert S_0=s_t} \doteq V_\pi^{\mcM'}(s_t)$.
Since $\mcM$ and $\mcM'$ have the same set of policies, a policy is optimal in $\mcM$ if and only if it is optimal in $\mcM'$; i.e. $\Pi_*(\mcM)  = \Pi_*(\mcM')$.
\end{proof}

In total, we did a policy invariant reward shaping because the MDPs $\mcM$ and $\mcM'$ have the same transition structure and the same optimal policies.

\begin{example}\label{example - 1 reward shaping step}
	Assume we have the forecasts $F_0=10$, $F_1=20$ and $F_2=15$ for days 0, 1, and 2, and the range of deviation given by $\IZ_\delta = \myset{-1,0,1}$. Further, let the purchase cost per newspaper be 5, and selling price per newspaper 7.
	
	The MDP is depicted in figure \ref{figure - example MDP 1}: 
	A node with the tuple $(t,F_t)$ as label represent the state $s_t$.
	At each normal state node, three outgoing arrows represent the three potential actions.
	The corresponding action is written on each arrow.
	Each arrow representing an action leads to a black node representing uncertainty. From the black node, there are three outgoing arrows representing the three potential deviations. The corresponding reward is written on each edge.		
	\begin{figure}[h]
		\centering
		\includegraphics[width=\textwidth]{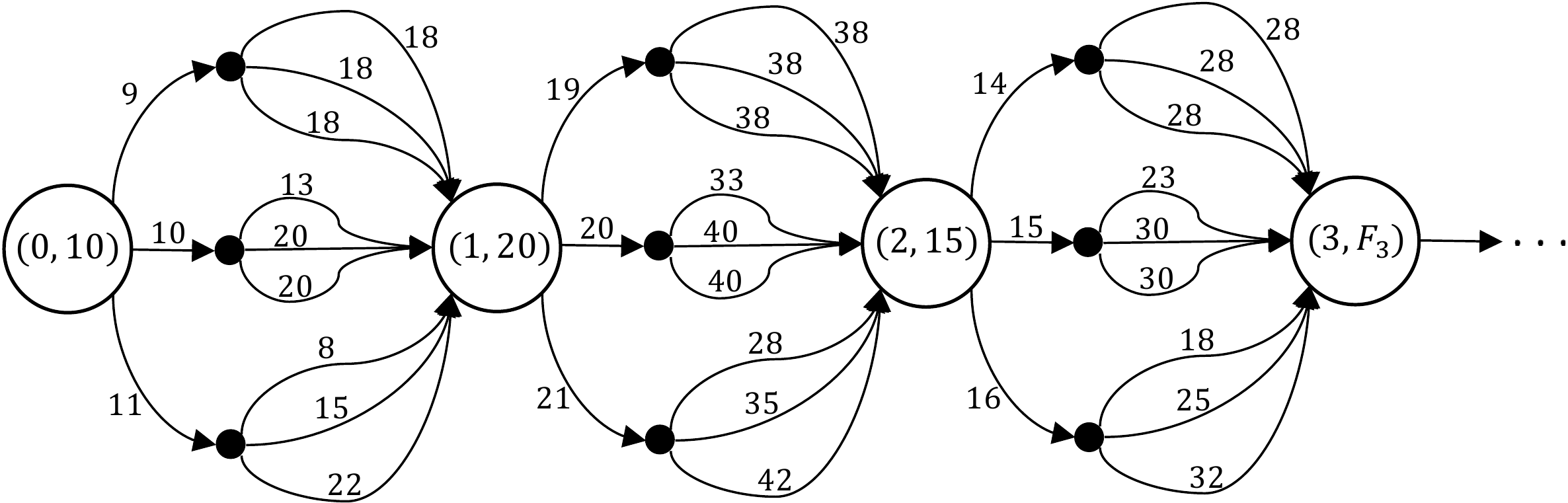}
		\caption{Example MDP without changed reward.}
		\label{figure - example MDP 1}
	\end{figure}
	
	After changing the reward structure as described in this section, we get a new MDP depicted in figure \ref{figure - example MDP 2}. This illustration already indicates the hidden symmetry. 
	It remains to adjust the actions.	
	\begin{figure}[h]
		\centering
		\includegraphics[width=\textwidth]{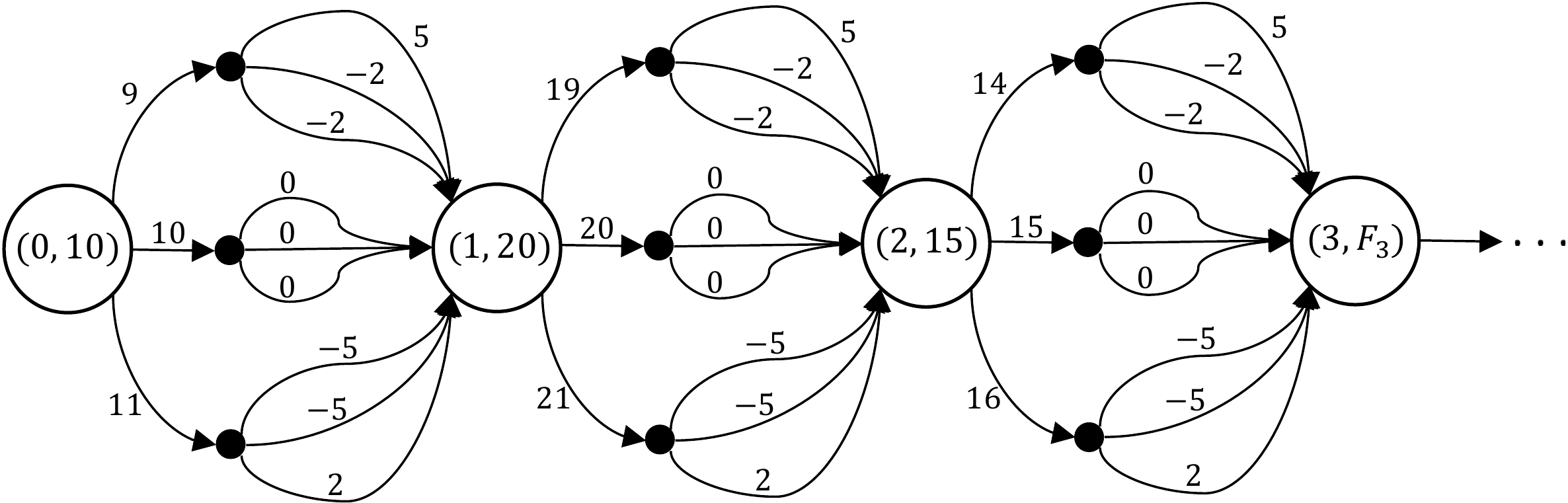}
		\caption{Example MDP with changed reward.}
		\label{figure - example MDP 2}
	\end{figure}
\end{example}

\paragraph*{Step 3. Relabeling of Actions}

We will relabel the actions of the current MDP $\mcM'\doteq (\mcS,\mcA,\Psi,\mcR',p')$ as follows:
In a state $s_t\in\mcS$, the new action $0$ shall be the optimal action of the deterministic case $\optdet(s_t)$, and the new action $n$ shall be optimal action of the deterministic case plus $n$, i.e. $\optdet(s_t) + n$. 
This means, we want to use a different denotation for the actions which is encoded by the bijective mapping
\begin{align*}
	g_{s_t} \colon \mcA(s_t) &\longto \mcA(s_t) - \optdet(s_t)\\ 
	a &\longmapsto a - \optdet(s_t)
\end{align*}
for all $s_t\in\mcS$.
Especially, $\mcA(s_t) - \optdet(s_t) = \IZ_\delta$ holds.

We get the relabeled MDP $\mcM''\doteq (\mcS,\mcA',\Psi',\mcR',p'')$ with $\mcA' \doteq \IZ_\delta$,  $\mcA'(s_t) = \IZ_\delta$, $\Psi' \doteq \mcS \times \IZ_\delta$ and the dynamic $p''\colon \mcS \times \mcR' \times \Psi' \to [0,1]$ is given by
\begin{align*}
	p''(s_l,r\vert s_t,a') \doteq p'(s_l,r\vert s_t,a' + \optdet(s_t)).
\end{align*}			

Since we only renamed the actions, the relabeling of $\mcM'$ to $\mcM''$ is given by the tuple $h\doteq(f, \mysetlong{g_{s_t} }{ s_t\in\mcS})$ with $f\colon \mcS \to \mcS$ given by $f(s_t)=s_t$. 		 
Therefore, $\mcM''$ is a relabeled variant of $\mcM'$, and by proposition 
\ref{prop - optimal policies relabelling},
the mapping
\begin{align*} 
	\Pi_*(\mcM'') \longto \Pi_*(\mcM'), \quad	\pi_* \longmapsto {\pi}_*\circ h
\end{align*}
%with ${\pi}_*'(a'\vert s_t) \doteq \pi_*(a' + \optdet(s_t)\vert s_t) $ for all $(s_t,a')\in \Psi'$
is bijective.

In total, we relabeled the actions. This was the last transformation needed to reveal a hidden symmetry of the newsvendor problem, as we will see.
Moreover, the MDPs $\mcM$ and $\mcM''$ are equivalent and $h$ is a relabeling from $\Psi$ to $\Psi'$ in respect to the equivalent definition of $\mcM$ and $\mcM''$.

\begin{example}[Example \ref{example - 1 reward shaping step} continued]\label{example - 2 relabelling of actions}
	After relabeling the actions as described in this subsection, we get a new MDP depicted in figure \ref{figure - example MDP after relabelling}. This figure clearly shows some symmetry in the MDP.	
	We see that the reward of an action is independent of the state.
	\begin{figure}[h]
		\centering
		\includegraphics[width=\textwidth]{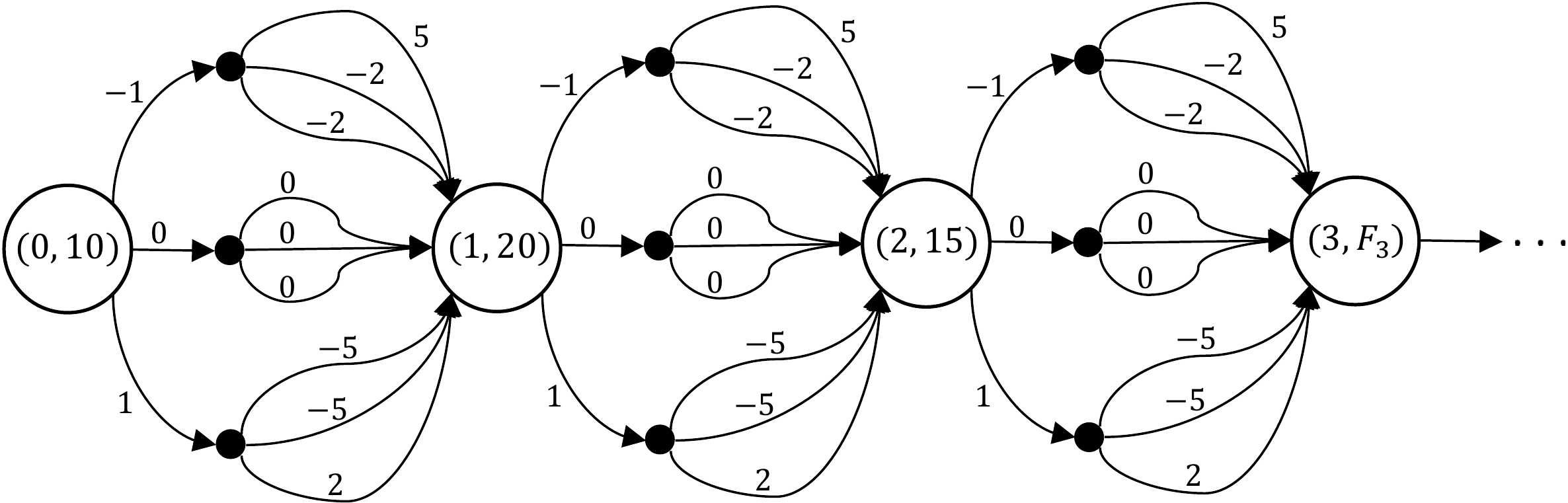}
		\caption{Example MDP after reward shaping and relabeling of actions.}
		\label{figure - example MDP after relabelling}
	\end{figure}
\end{example}

\paragraph*{Step 4. Determining a Hidden Symmetry}
As we can see in Example \ref{example - 2 relabelling of actions}, we revealed some kind of symmetry.
We capture this by the following lemma.
\begin{lemma}\label{lemma -- p'' ist immer gleich unabhaenging vom state}
	The equation
	\begin{align}\label{equation -- p'' ist immer gleich unabhaenging vom state}
		p'' (s_l, r \vert s_t, a') = p''(s_{l+k}, r \vert s_{t+k}, a')
	\end{align}
	holds for all 
	$t,l,k\in\IN_0$,
	$a\in\mcA'$,$r\in\mcR'$.
\end{lemma}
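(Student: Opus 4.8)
The plan is to trace the claimed identity back through the definitions of $p''$, $p'$ and $D'$, reducing it to the observation that after the reward shaping of Step~2 and the relabeling of Step~3 the one-step reward distribution no longer sees the forecast $F_t$. First I would dispose of the trivial case: by construction $p''(s_l,r\vert s_t,a') \doteq p'(s_l,r\vert s_t, a'+\optdet(s_t))$, and $p'(s_l,r\vert s_t,a)$ is supported on $l=t+1$. Hence if $l\neq t+1$, then also $l+k\neq (t+k)+1$, so both sides of~(\ref{equation -- p'' ist immer gleich unabhaenging vom state}) vanish. It therefore suffices to treat the case $l=t+1$, in which case $l+k=(t+k)+1$ as well.

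For $l=t+1$, I would unfold the definitions and use $\optdet(s_t)=F_t$ from Step~1 to get
\begin{align*}
	p''(s_{t+1},r\vert s_t,a') &= \sum_{d\in D'(r,t,a'+F_t)} \Pr\myset{\delta_t=d}, \\
	D'(r,t,a'+F_t) &= \mysetlong{d\in\IZ_\delta}{ r = \reward_t(a'+F_t,d) - \reward_t(F_t,d) }.
\end{align*}
The key step is to show that the reward difference $\reward_t(a'+F_t,d) - \reward_t(F_t,d)$ does not depend on $t$. I would verify this by a direct computation from $\reward_t(a,d) = \min\myset{F_t+d,a}C_s - aC_p$: since $\min\myset{F_t+d,\,a'+F_t} = F_t + \min\myset{d,a'}$ and $\min\myset{F_t+d,\,F_t} = F_t+\min\myset{d,0}$, the forecast cancels and the difference equals $\bigl(\min\myset{d,a'}-\min\myset{d,0}\bigr)C_s - a'C_p$, an expression in $a',d,C_s,C_p$ only. (This cancellation is exactly what the reward function condition stated earlier encodes, applied with $x=0$ and $l=t+k$, so the argument is unchanged for any admissible reward function.) Consequently $D'(r,t,a'+F_t) = D'(r,t+k,a'+F_{t+k})$ for every $k\in\IN_0$.

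Finally, since the deviations $(\delta_t)_{t\in\IN_0}$ are i.i.d.\ we have $\Pr\myset{\delta_t=d}=\Pr\myset{\delta_{t+k}=d}$ for all $d$ and $k$, and combining this with the equality of index sets just established gives
\begin{align*}
	p''(s_{t+1},r\vert s_t,a') &= \sum_{d\in D'(r,t,a'+F_t)}\Pr\myset{\delta_t=d} \\
	&= \sum_{d\in D'(r,t+k,a'+F_{t+k})}\Pr\myset{\delta_{t+k}=d} \\
	&= p''(s_{t+k+1},r\vert s_{t+k},a'),
\end{align*}
which is the asserted identity. I do not expect a genuine obstacle here: the only delicate point is the cancellation of $F_t$ in the reward difference, and although that computation is routine, it is conceptually the crux of the lemma, since it is precisely the fact that makes the transformed MDP time-homogeneous and thus reveals the hidden symmetry.
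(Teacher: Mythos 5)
Your proposal is correct and follows essentially the same route as the paper's proof: dispose of the $l\neq t+1$ case, reduce the claim to the $t$-independence of $D'(r,t,a'+F_t)$, verify that the forecast $F_t$ cancels in the reward difference $\reward_t(a'+F_t,d)-\reward_t(F_t,d)=\bigl(\min\myset{d,a'}-\min\myset{d,0}\bigr)C_s-a'C_p$, and invoke the i.i.d.\ property of the deviations. The only addition is your aside connecting the cancellation to the general reward function condition, which the paper's proof does not make explicit.
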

\begin{proof}
	If $l \neq t+1$, we get $0=0$. Let $l=t+1$.
	By definition of $p''$ and $p'$, we get
	\[ 
		p''(s_{t+1},r\vert s_t,a') 
		\doteq p'(s_{t+1},r\vert s_t,a'+F_t)
		\doteq \sum_{d\in D'(r,t,a'+F_t)} \Pr\myset{\delta_{t} =d}.
	\]
	If the set $D'(r,t,a'+F_t)$ is independent of $t$, then the equation (\ref{equation -- p'' ist immer gleich unabhaenging vom state}) is true because $(\delta_t)_{t\in I}$ are i.i.d. random variables.
	By definition, we have
	\begin{align*}
		D'(r, t, a'+F_t) \doteq \mysetlong{ d\in \IZ_\delta }{ r = \reward_t(a'+F_t,d) - \reward_t(F_t,d) }.
	\end{align*}
	We calculate
	\begin{align*}
		\reward_t(a' + F_t,d) - \reward_t(F_t,d)  =
		\left(\min\myset{ d, a' } - \min\myset{d, 0} \right) C_s - a' C_p,
	\end{align*}
	and thus, the set $D'(r,t, a'+F_t)$ is independent of $t$. 
	Hence, the equation (\ref{equation -- p'' ist immer gleich unabhaenging vom state}) is true.
\end{proof}

In Lemma \ref{lemma -- p'' ist immer gleich unabhaenging vom state}, we see that every state behaves the same.
Therefore, we define an equivalence relation $\sim_\mcS$ that identifies all states with each other, and an equivalence relation $\sim_{\Psi'}$ that identifies all state-action pairs with each other that have the same action.
Therefore
\begin{align*}
	\sim_{\mcS} &\doteq \mcS \times \mcS\\
	\sim_{\Psi'} &\doteq \mysetlong{ ((s_1,a_1),(s_2,a_2)) \in \Psi' \times \Psi' }{ a_1= a_2 }.
\end{align*}
The tuple $\sim \doteq (\sim_{\mcS},\sim_{\Psi'})$ is a candidate for a visible symmetry.
In fact, $\sim$ is a simple visible symmetry in $\mcM''$ because for all $(s_t,a_t) \sim_{\Psi'} (s_l,a_l)$ the statement
\[ 
\forall r\in\mcR' : p''(\mcS, r \vert s_t,a_t) = p ''(\mcS, r \vert s_l,a_l)
 \]
is true (because of Lemma \ref{lemma -- p'' ist immer gleich unabhaenging vom state}), and $a_t=a_l$ holds.

Since the MDPs $\mcM$ and $\mcM''$ are equivalent, the tuple $(\mcM'',\sim)$ is a hidden symmetry of $\mcM$.

\paragraph*{Step 5. Quotient MDP}
With the hidden symmetry $(\mcM'', \sim)$ of the MDP $\mcM$, we can create the associated quotient MDP $\quotsimMprimeprime$.
The quotient MDP $\quotsimMprimeprime \doteq ( \olS, \olA, \olPsi, \olR, \olp )$ is given as follows:
The set of states is $\olS \doteq \myset{\mcS}$.
The set of actions is $\olA \doteq \IZ_\delta$, and the set of admissible state-actions pairs is $\olPsi \doteq \myset{\mcS} \times \IZ_\delta$.	The set of rewards is $\olR \doteq \mcR'$.
 The dynamic $\olp\colon \olS \times \olR \times \olPsi \to [0,1]$ is given by 
\begin{align*}
	\olp(\mcS,r \vert \mcS,a) \doteq p''(s_1,r \vert s_0,a).
\end{align*}	

Hence, we created the quotient MDP $\quotsimMprimeprime$, which is very simple since it consists of only one single state. 
In a sense, the quotient MDP reflects the stochastic of the original problem.
	
\begin{example}[Example \ref{example - 2 relabelling of actions} continued]
	By using the simple visible symmetry, we can aggregate all states into one and form the corresponding quotient MDP.
	The quotient MDP consist of only one state as depicted in figure \ref{figure - example MDP 4}.
	\begin{figure}[h]
		\centering
		\includegraphics[width=0.53\textwidth]{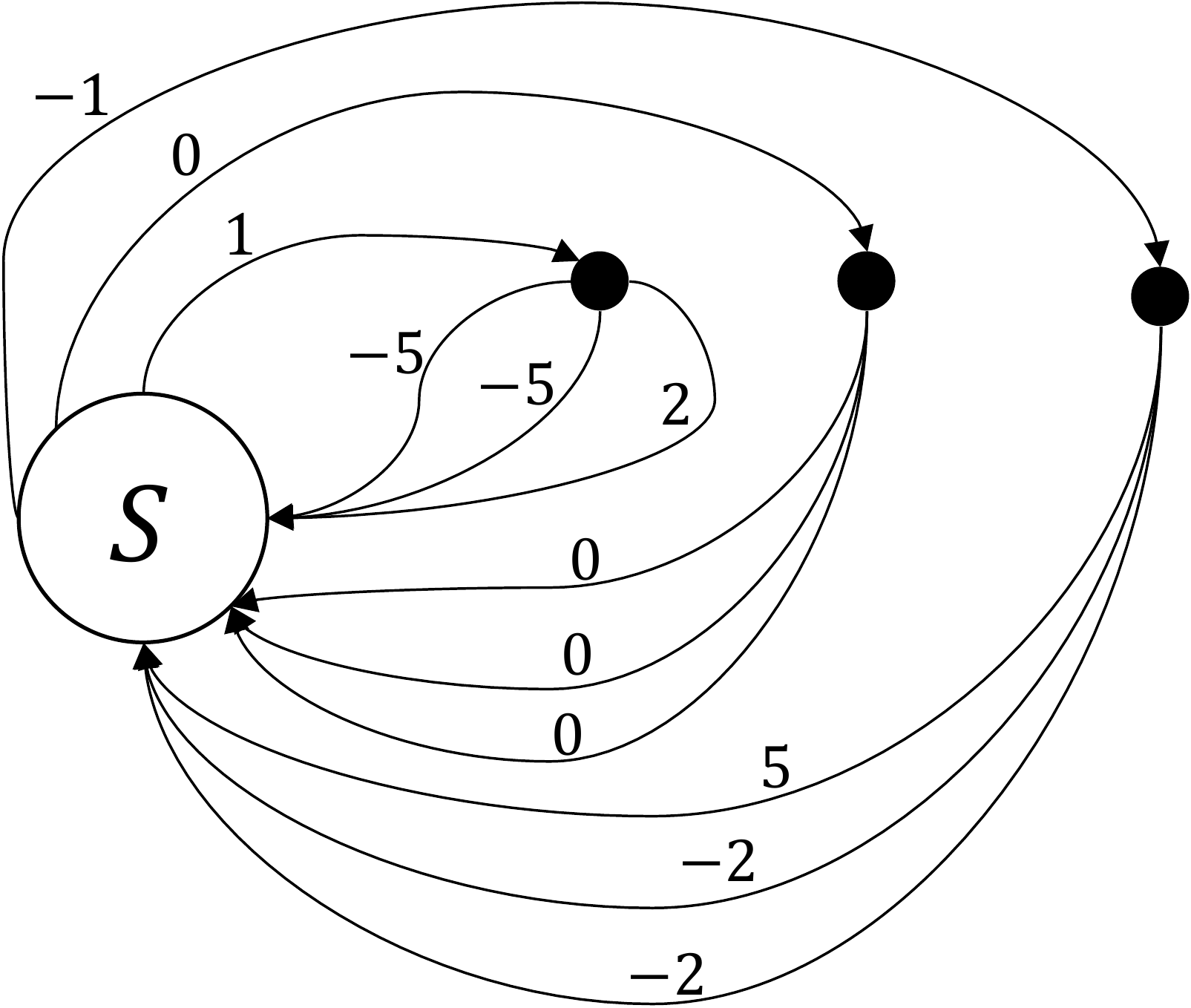}
		\caption{Quotient MDP of the example MDP.}
		\label{figure - example MDP 4}
	\end{figure}	
\end{example}

\paragraph*{Step 6. Pullback of Optimal Policies}

To complete the reduction of the multi-period newsvendor problem, we state how to pull an optimal policy in the quotient MDP $\quotsimMprimeprime$ back to the original MDP $\mcM$.

By Theorem \ref{thm - hidden symmetry pullback of optimal policy}, we get the injective mapping
$\Lambda \colon \Pi_*(\quotsimMprimeprime) \to \Pi_*(\mcM)$	
given by
\begin{align*}		
	\Lambda(\pi)(a\vert s_t) \doteq 
	\pi(a - \optdet(s_t)\vert [s_t])
	= \pi(a - F_t \vert \mcS)
\end{align*} 
for all $(s_t,a)\in\Psi$.
Moreover, the sets $\Pi_*(\quotsimMprimeprime)$ and $\Pi_*(\mcM)$ are not empty (this holds by \cite[Theorem 6.2.10]{Puterman2014}).

By considering the map $\Lambda$ with a deterministic policy $\pi \in \Pi_*(\quotsimMprimeprime)$ as an argument, one gets a better intuition of the pullback of the optimal policies from $\quotsimMprimeprime$ to $\mcM$. Then, $\Lambda(\pi)$ is a deterministic optimal policy in $\mcM$ and is given by
\[ 	\Lambda(\pi) (s_t) \doteq	\pi(\mcS) + \optdet(s_t) = \pi(\mcS) + F_t. \]

This pullback completes the reduction of the multi-period newsvendor problem using a hidden symmetry.

\paragraph*{Advantages of the Reduction}
%\subsection{Theoretical Advantages}\label{subsection - theoretical advantages}
The multi-period newsvendor problem $\mcM$ has been significantly reduced:
We reduced the MDP $\mcM$, which consists of infinitely many states, to the MDP $\quotsimMprimeprime$, which consists of only one state.
Moreover, the reduced MDP $\quotsimMprimeprime$ is as complex as one state of $\mcM$.

Both MDPs are \emph{bandit problems} \citep{Sutton2017}, and thus, it is easy to
compare them: The MDP $\mcM$ is a contextual bandit problem with $\abs{\mysetlong{F_t}{t\in\IN_0}}$ multi-
armed bandits, and the MDP $\quotsimMprimeprime$ is just one multi-armed bandit. 
Therefore, the MDP $\mcM$ needs at least $\abs{\mysetlong{F_t}{t\in\IN_0}}$ times as many steps as the MDP $\mcM$ to be solved with the same bandit solving algorithm, depending on the occurrence of the values in the sequence $(F_t)_{t\in\IN_0}$.

\subsection{Problem Variant: The 5-day Cycle Multi-Period News\-vendor Problem}\label{subsection - 5day newsvendor}

This subsection applies the same procedure of reduction as above to a variant of the multi-period newsvendor problem.
This variant has deviations that follow a cyclic behavior. 
In this way, we can better understand the role of deviations for our procedure of reduction and hidden symmetries.
We will see that this cyclical property of the deviations is sufficient.

\paragraph*{Applying the Reduction Procedure}
The \emph{5-day cycle multi-period newsvendor problem} follows a 5-day cycle for the deviations as follows. 
We assume that the newsvendor works 5 days a week, say Monday to Friday, and the deviations depend on the respective days of the week.
Thus, the deviations of each set  $D_b \doteq \mysetlong{\delta_{b+5k} }{ k \in \IN_0 }$ with $b\in \myset{0,1,2,3,4}$ are i.i.d.\ random variables.

As in the standard multi-period newsvendor problem, the optimal policy of the deterministic case is again $\optdet$, and the transformations are done in the same way. 
Therefore, we obtain the MDP $\mcM'' \doteq(\mcS, \mcA',\Psi',\mcR',p'')$ as described in Subsection \ref{subsection - applying the procedure}. 
The only difference is that 
the equation 
$p'' (s_l, r \vert s_t, a') = p''(s_{l+k}, r \vert s_{t+k}, a')$
from Lemma \ref{lemma -- p'' ist immer gleich unabhaenging vom state}
does not hold for all $k\in\IN_0$. 
Instead the equation only holds for all $k\in 5\IN_0$.
This is because the deviations of each set $D_b$ 
%$D_b\doteq \mysetlong{\delta_{b+5k} }{ k \in \IN_0}$ 
with $b\in \myset{0,1,2,3,4}$ are i.i.d.\ random variables.

In contrast to the original multi-period newsvendor problem, 
the simple visible symmetry of $\mcM''$ here does not aggregate all states/days into one but all Mondays, Tuesdays, \dots, Fridays. 
The aggregations of the state-action pairs are done in the same way as in the original multi-period newsvendor problem but with the addition that the states must have to be the same day of the week.
Mathematically, the simple visible symmetry is given by $\sim\doteq(\sim_{\mcS}, \sim_{\Psi'})$ with  
\begin{align*}
	\sim_{\mcS} \doteq& \mysetlong{(s_{t_1}, s_{t_2}) \in\mcS\times\mcS }{ t_1-t_2 \in 5\IZ} \\
	\sim_{\Psi'} \doteq& \mysetlong{((s_{t_1},a_1'),( s_{t_2},a_2')) \in\Psi'\times\Psi' }{ t_1-t_2 \in 5\IZ, a_1' =a_2'}.
\end{align*}
Due to this, the quotient MDP consists of 5 states.
The pullback of optimal policies from the quotient MDP to the original MDP is done via the same map
$\Lambda$.

\paragraph*{Gained Insights}
In the normal variant, the deviations are i.i.d. random variables. 
It turns out that this condition is not required for our procedure of reduction and the existence of a hidden symmetry.
Instead, a cyclic behavior of the deviations, as in this variant, is sufficient. 

The cyclic behavior causes the quotient MDP to have five states instead of one.
Interestingly, the pullback is completely the same.

\section{Conclusion}\label{section - conclusion}

This paper introduced hidden symmetries, which are a new type of symmetries for MDPs.
They naturally provide a model reduction framework for MDPs. 
We showed how this model reduction framework using hidden symmetries works by applying it to the multi-period newsvendor problem.
In this way, we showed that hidden symmetries can reduce problems that visible symmetries cannot reduce. This is because the multi-period newsvendor problem has no visible symmetries but a hidden symmetry.
We drastically reduced the problem by using hidden symmetries. The standard variant has a reduced MDP consisting of only one state.
This highlights the advantage of hidden symmetries over the common visible ones.

Our procedure of reduction of the multi-period newsvendor problem followed some concrete steps.
The main idea here was to equivalently transform the MDP based on a fixed policy.
The presented procedure took advantage of the fact that the discount factor is zero.
This caused that the policy invariant reward shaping step was simple.

We suppose that the approach of using a fixed policy to reveal a hidden symmetry will work for many other problems as well. 
For example, we think that planning problems modeled as control problems in discrete-time dynamical systems will benefit from this approach.
Clearly, if the discount factor is greater than zero, the policy invariant reward shaping step must be adjusted.
Currently, we are working on such a generalized version of the procedure presented in this paper.

%\section*{Declarations}
%The authors declare that they have no conflict of interest.

%%===========================================================================================%%
%% If you are submitting to one of the Nature Portfolio journals, using the eJP submission   %%
%% system, please include the references within the manuscript file itself. You may do this  %%
%% by copying the reference list from your .bbl file, paste it into the main manuscript .tex %%
%% file, and delete the associated \verb+\bibliography+ commands.                            %%
%%===========================================================================================%%

\bibliography{library}% common bib file

\begin{thebibliography}{24}
\providecommand{\natexlab}[1]{#1}
\providecommand{\url}[1]{{#1}}
\providecommand{\urlprefix}{URL }
\providecommand{\doi}[1]{\url{https://doi.org/#1}}
\providecommand{\eprint}[2][]{\url{#2}}
 \bibcommenthead

\bibitem[{Behboudian et~al.(2021)Behboudian, Satsangi, Taylor, Harutyunyan, and
  Bowling}]{Behboudian2021}
Behboudian P, Satsangi Y, Taylor ME, et~al. (2021) {Policy invariant explicit
  shaping: an efficient alternative to reward shaping}. Neural Computing and
  Applications 1. \doi{10.1007/s00521-021-06259-1},
  \urlprefix\url{https://doi.org/10.1007/s00521-021-06259-1}

\bibitem[{Boutilier and Dearden(1995)}]{Boutilier1995}
Boutilier C, Dearden R (1995) {Exploiting Structure in Policy Construction}.
  IJCAI International Joint Conference on Artificial Intelligence 14:1104--1113

\bibitem[{Boutilier et~al.(2000)Boutilier, Dearden, and
  Goldszmidt}]{Boutilier2000}
Boutilier C, Dearden R, Goldszmidt M (2000) {Stochastic dynamic programming
  with factored representations}. Artificial Intelligence 121(1):49--107.
  \doi{10.1016/S0004-3702(00)00033-3}

\bibitem[{Castro and Precup(2010)}]{Castro2010}
Castro PS, Precup D (2010) {Using Bisimulation for Policy Transfer in MDPs}.
  In: Proceedings of the Twenty-Fourth AAAI Conference on Artificial
  Intelligence (AAAI-10), pp 1065--1070

\bibitem[{Dean and Givan(1997)}]{Dean1997}
Dean T, Givan R (1997) {Model minimization in Markov decision processes}. In:
  Proceedings of the National Conference on Artificial Intelligence, pp
  106--111

\bibitem[{Edgeworth(1888)}]{Edgeworth1888}
Edgeworth FY (1888) {The Mathematical Theory of Banking Author}. Royal
  Statistical Society 51(1):113--127

\bibitem[{Givan et~al.(2003)Givan, Dean, and Greig}]{Givan2003}
Givan R, Dean T, Greig M (2003) {Equivalence Notions and Model Minimization in
  Markov Decision Processes}. Artificial Intelligence 147(1-2):163--223.
  \doi{10.1016/S0004-3702(02)00376-4}

\bibitem[{Hartmanis and Stearns(1966)}]{Hartmanis1966}
Hartmanis J, Stearns RE (1966) {Algebraic Structure Theory of Sequential
  Machines}

\bibitem[{Kemeny and Snell(1976)}]{Kemeny1976}
Kemeny JG, Snell JL (1976) {Finite Markov Chains}

\bibitem[{Khouja(1999)}]{Khouja1999}
Khouja M (1999) {The single-period (news-vendor) problem: literature review and
  suggestions for future research}. Omega 27(5):537--553.
  \doi{10.1016/S0305-0483(99)00017-1}

\bibitem[{Larsen and Skou(1991)}]{Larsen1991}
Larsen KG, Skou A (1991) {Bisimulation through Probabilistic Testing}.
  Information and Computation 94(1):1--28. \doi{10.1016/0890-5401(91)90030-6}

\bibitem[{Laud and DeJong(2003)}]{Laud2003}
Laud A, DeJong G (2003) {The Influence of Reward on the Speed of Reinforcement
  Learning: An Analysis of Shaping}. Proceedings of the 20th International
  Conference on Machine Learning (ICML-03) 1(1993):440--447

\bibitem[{Laud(2004)}]{Laud2004}
Laud AD (2004) {Theory and Application of Reward Shaping in Reinforcement
  Learning}. PhD thesis, University of Illinois at Urbana-Champaign,
  \urlprefix\url{http://onlinelibrary.wiley.com/doi/10.1002/cbdv.200490137/abstract}

\bibitem[{Lee and Yannakakis(1992)}]{Lee1992}
Lee D, Yannakakis M (1992) {Online Minimization of Transition Systems (Extended
  Abstract)}. Proceedings of the twenty-fourth annual ACM symposium on Theory
  of computing pp 264--274.
  \urlprefix\url{http://portal.acm.org/citation.cfm?doid=129712.129738}

\bibitem[{Mahajan and Tulabandhula(2017)}]{Mahajan2017}
Mahajan A, Tulabandhula T (2017) {Symmetry Learning for Function Approximation
  in Reinforcement Learning}. arXiv preprint arXiv:170602999

\bibitem[{Ng et~al.(1999)Ng, Harada, and Russell}]{Ng1999}
Ng AYT, Harada D, Russell S (1999) {Policy invariance under reward
  transformations: Theory and application to reward shaping}. Proceedings of
  the 16th International Conference on Machine Learning (ICML 1999) pp
  278--287. \doi{10.1.1.48.345}

\bibitem[{Puterman(2014)}]{Puterman2014}
Puterman ML (2014) {Markov decision processes: discrete stochastic dynamic
  programming}

\bibitem[{Qin et~al.(2011)Qin, Wang, Vakharia, Chen, and Seref}]{Qin2011}
Qin Y, Wang R, Vakharia AJ, et~al. (2011) {The newsvendor problem: Review and
  directions for future research}. European Journal of Operational Research
  213(2):361--374. \doi{10.1016/j.ejor.2010.11.024}

\bibitem[{Randl{\o}v and Alstr{\o}m(1998)}]{Randlov1998}
Randl{\o}v J, Alstr{\o}m P (1998) {Learning to Drive a Bicycle using
  Reinforcement Learning and Shaping}. Proceedings of the 15th International
  Conference on Machine Learning (ICML 1998) pp 463--471

\bibitem[{Ravindran and Barto(2001)}]{Ravindran2001}
Ravindran B, Barto AG (2001) {Symmetries and Model Minimization in Markov
  Decision Processes}. Tech. rep., Amherst, Massachusetts, USA

\bibitem[{Sutton and Barto(2017)}]{Sutton2017}
Sutton RS, Barto AG (2017) {Reinforcement Learning: An Introduction}

\bibitem[{Walsh(2006)}]{Walsh2006}
Walsh T (2006) {General Symmetry Breaking Constraints}. International
  Conference on Principles and Practice of Constraint Programming pp 650--664

\bibitem[{Walsh(2012)}]{Walsh2012}
Walsh T (2012) {Symmetry breaking constraints: Recent results}. Proceedings of
  the Twenty-Sixth AAAI Conference on Artificial Intelligence (AAAI-12)
  26(1):2192--2198

\bibitem[{Zinkevich and Balch(2001)}]{Zinkevich2001}
Zinkevich M, Balch T (2001) {Symmetry in Markov Decision Processes and its
  Implications for Single Agent and Multiagent Learning}. Proceedings of the
  18th International Conference on Machine Learning (ICML 2001) pp 632--639

\end{thebibliography}
%% if required, the content of .bbl file can be included here once bbl is generated
%%\input sn-article.bbl

%% Default %%
%%\input sn-sample-bib.tex%

\end{document}